\newcommand{\dd}{\text{d}}
\newcommand{\R}{\mathbb R}
\newcommand{\yd}[0]{y_\textrm{data}}
\newcommand{\J}[0]{\mathcal J}
\newcommand{\Uad}{\mathcal U_\text{ad}}
\newcommand{\e}{\epsilon}
\DeclareMathOperator{\chirp}{chirp}
\DeclareMathOperator{\squa}{square}
\newtheorem{theorem}{Theorem}
\newtheorem{lemma}[theorem]{Lemma}
\newtheorem{problem}[theorem]{Problem}
\newtheorem*{remark}{Remark}
\title{Data-driven adjoint-based identification of port-Hamiltonian systems in time domain}
\author{Michael G\"unther\footnote{Research Group Applied and Computational Mathematics, \href{mailto:guenther@uni-wuppertal.de}{guenther@uni-wuppertal.de}} ,
Birgit Jacob\footnote{Corresponding author, Research Group Functional Analysis, \href{mailto:bjacob@uni-wuppertal.de}{bjacob@uni-wuppertal.de}} ,
Claudia Totzeck\footnote{Research Group Optimization, \href{mailto:totzeck@uni-wuppertal.de}{totzeck@uni-wuppertal.de}  }}
\affil{IMACM, School of Mathematics and Natural Sciences, \\ University of Wuppertal, Germany}
\begin{document}
\maketitle

\begin{tikzpicture}[remember picture,overlay]
	\node[anchor=north east,inner sep=20pt] at (current page.north east)
	{\includegraphics[scale=0.2]{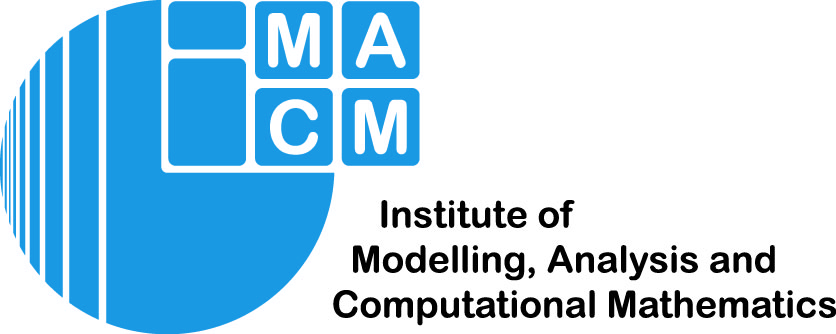}};
\end{tikzpicture}

\begin{abstract}We present a gradient-based identification algorithm to identify the system matrices of a linear port-Hamiltonian system from given input-output time data. Aiming for a direct structure-preserving approach, we employ techniques from optimal control with ordinary differential equations and define a constrained optimization problem. The input-to-state stability is discussed which is the key step towards the existence of optimal controls. Further, we derive the first-order optimality system taking into account the port-Hamiltonian structure. Indeed, the proposed method preserves the skew-symmetry and positive (semi)-definiteness of the system matrices throughout the optimization iterations.  Numerical results with perturbed and unperturbed synthetic data, as well as an example from the PHS benchmark collection~\cite{PHSbenchmarks} demonstrate the feasibility of the approach.
\end{abstract}

\begin{minipage}{0.9\linewidth}
 \footnotesize
\textbf{AMS classification:} 37J06, 37M99, 49J15, 49K15, 49M29, 49Q12, 65P10, 93A30, 93B30, 93C05
\medskip

\noindent
\textbf{Keywords:} Port-Hamiltonian systems, data-driven approach, optimal control, adjoint-based identification, time domain, coupled dynamical systems, structure preservation
\end{minipage}

\section{Introduction}
In structure-preserving modelling of coupled dynamical systems the port-Hamil\-tonian framework allows for constructing overall port-Hamiltonian systems (PHS) provided that (a) all subsystems are PHS and (b) a power-conserving interconnection between the input and outputs of the subsystems is provided~\cite{MeMo19, vanDerSchaft06,EbMS07,DuinMacc09}.
 In realistic applications this approach reaches its limits if  for a specific subsystem, either no knowledge that would allow the definition of a physics-based PHS is available, or one is forced to use user-specified simulation packages with no information of the intrinsic dynamics, and thus only the input-output characteristics are available. 
 In both cases a remedy is to generate input-output data either by physical measurements or evaluation of the simulation package, and derive a PHS surrogate that fits these input-output data best. This PHS surrogate can then be used to model the subsystem, and overall leads to a coupled PHS with structure-preserving properties.

There is an extensive literature on system identification methods, see for example \cite{Lj99}. Here we focus on the identification of port-Hamiltonian systems. 
Port-Hamiltonian realizations are a special class of realizations of passive systems. More precisely, in \cite{ChGeHi23} it is shown that passive systems have a port-Hamiltonian realization. For a detailed survey on passive systems we refer to \cite{wi72}. Data-driven port-Hamiltonian realizations of dynamic systems from input-output data have been investigated using various approaches. The Loewner framework has been used to construct a port-Hamiltonian realization from frequency domain data in \cite{AnLeIo,benner2020identification}. Another frequency domain approach is proposed in \cite{SCH}, where a parametrization of the class of PHS is used to permit the usage of unconstrained optimization solvers during identification. In \cite{ChGB22} the frequency response data are inferred using the time-domain input-output data and then frequency domain methods are used to construct a port-Hamiltonian realization.  Further, in \cite{cherifi2019optimization} a best-fit linear state-space model is derived from time-domain data and then, in a post-processing step, a nearest port-Hamiltonian realization is inferred. 
Based on  input, state and output time-domain data and a given quadratic Hamiltonian the authors of \cite{MoNiUn22} construct a port-Hamiltonian realization using dynamic mode decompositions.

 We propose a direct time-domain approach for constructing a best-fit PHS model in one step using input-output data. Techniques from optimal control with ordinary differential equations and a constrained optimization problem are employed. We derive the first-order optimality system taking into account the port-Hamiltonian structure. The proposed method preserves the skew-symmetry and positive (semi)-definiteness of the system matrices throughout the optimization iterations. Our approach does not generate frequency data as an intermediate step and data of the state variable are not needed. We remark, that our method is overdetermined because we identify all system parameters including the quadratic Hamiltonian. We account for this by showing different test cases in the numerical section. If the Hamiltonian is known it is possible to use our approach to derive a best-fit PHS with a given Hamiltonian.

The paper is organized as follows: in the next section, we define the identification problem of computing the best-fit of a PHS to given input-output data, prove the continuous dependence of the state on the input which is the key step to obtain the existence of solutions to the identification problem. The adjoint equation and optimality conditions for all system matrices, i.e., the positive definite scaling matrix $Q$, the fixed-rank semi-definite dissipation matrix $R$, the skew-symmetric system matrix $J$, the input matrix $B$ and the initial value, are derived in section three. The gradient-descent algorithm and numerical schemes for the identification process are stated in chapter four. Various numerical tests are provided in chapter five: in particular, we provide several examples for identification problems in the deterministic case and we test the robustness in case of noisy data. The numerical examples are rounded off with an investigation of the model as model order reduction approach, here we use a single mass-spring-damper chain example from the PHS benchmark collection~\cite{PHSbenchmarks}. We conclude with an outlook.

\section{Identification problem}
For a given input $u \colon [0,T] \rightarrow \R^m,$ a data set $\yd \colon [0,T] \rightarrow \R^m$ which can consist of continuous data on the interval $[0,T]$ for $T>0$ or continuous interpolation of measurements at discrete time points and reference values for the identification $w_\text{ref},$ we consider the identification problem with cost functional
\begin{equation}\label{eq:costfunctional}
\J(y,w) = \frac{1}{2} \int_0^T |y(t) - \yd(t) |^2 \dd t + \frac{\lambda}{2} | w-w_\text{ref} |^2, \qquad \lambda \ge 0,
\end{equation}
subject to the state constraint
\begin{subequations}\label{eq:state}
\begin{align}
    \frac{d}{dt} x &= (J-R) Qx + B u, \qquad x(0)=x_0 \in \R^n, \label{eq:ODE}\\
    y &= B^\top Q x. \label{eq:output}
\end{align}
\end{subequations}
where $w = (J,R,Q,B,x_0)$ contains the matrices and initial data to be identified having the properties
\begin{equation}\label{eq:matrixproperties}
 J,R,Q \in \R^{n\times n}, \qquad J^\top = -J, \qquad R \succcurlyeq 0, \qquad Q \succ 0, \qquad B \in \R^{n\times m}.
\end{equation}
If no information on reference data is available, we recommend to set $\lambda=0$.
Note that only output data and for $\lambda>0$ reference values  are considered in the cost functional. Hence, the dimension $n$ of the internal state is unknown. We assume in the following that $n$ is chosen a priori and remark that a deliberate small choice can also be interpreted as a model order reduction for the internal state. For notational convenience, we define
\[
\J_1(y) := \frac{1}{2} \int_0^T |y(t) - \yd(t) |^2 \dd t, \qquad \J_2(w) := \frac{\lambda}{2} | w-w_\text{ref} |^2.
\]
In applications often $m\le n$ hence we obtain only partial information about the internal state $x$. In particular, we have no information about the initial condition $x_0.$ This is the reason why we include the initial data in the identification process for $w.$ Although we use techniques from optimal control theory, we may refer to $w$ as parameters in the following and introduce the parameter space
\[ W = \R^{n\times n} \times \R^{n\times n} \times \R^{n\times n} \times \R^{n\times m} \times \R^n
\]
with set of admissible parameters as
\[
\Uad \subseteq \{ w = (J,R,Q,B,x_0) \in W \colon J^\top = -J,R \succcurlyeq 0,  Q \succ 0  \}.
\]
Moreover, we define the output space $Y=H^1((0,T),\R^m)$.

For later reference we define the identification task at hand:
\begin{problem}\label{problem}
We seek to find system matrices and initial data, $w=(J,R,Q,B,x_0),$ which solve the problem
\begin{equation}\label{opt_prob}\tag{IdP}
\min\limits_{(y,w) \in Y \times \Uad } \J(y,w) \quad \text{subject to} \quad \eqref{eq:state}.
\end{equation}
\end{problem}

In \cite{guenther2022structure}, we proposed a sensitivity-based approach to identify the system matrices of a port-Hamiltonian system and argued that this is a valid ansatz for small-scale systems, as it requires to solve auxilliary problems for each basis element of the matrix spaces. Here, we derive an adjoint-based approach. Before we begin the derivation of the algorithm, we analyze the well-posedness of the identification problem. First, we prove that the state solution depends continuously on the data.

\begin{lemma}\label{lem:contdep}
For every $w \in \Uad$ and $u \in C([0,T],\R^m)$ the state equation \eqref{eq:state} admits a unique solution $x\in C^1([0,T],\R^n)$. Moreover, the solution depends continuously on the data, in more detail, for $w, w' \in \Uad$ and corresponding solutions $x,x'$ there exists a constant $C>0$ such that
\[
\| x - x' \|_{H^1((0,T),\R^n)} \le C \|w-w'\|_{\Uad}.
\]
\end{lemma}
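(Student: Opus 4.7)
The plan is to treat the state equation as a linear inhomogeneous ODE with constant coefficients and use the variation-of-constants formula for existence/uniqueness, then establish continuous dependence through a Grönwall estimate for the difference of two solutions.

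For the first part, fix $w = (J,R,Q,B,x_0) \in \Uad$ and $u \in C([0,T],\R^m)$, and set $A := (J-R)Q \in \R^{n\times n}$. The state equation \eqref{eq:ODE} reads $\dot x = Ax + Bu$ with $x(0)=x_0$, i.e.\ a linear ODE with constant coefficient matrix and continuous forcing. Standard theory (Picard--Lindel\"of, or direct verification of the variation-of-constants formula $x(t) = e^{At}x_0 + \int_0^t e^{A(t-s)} Bu(s)\,\dd s$) yields a unique solution $x \in C^1([0,T],\R^n)$. Note that the algebraic constraints on $(J,R,Q)$ in $\Uad$ are not needed for this step; only the boundedness of the entries enters.

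For continuous dependence, let $w' = (J',R',Q',B',x_0') \in \Uad$ with $A' := (J'-R')Q'$ and corresponding solution $x'$. Subtracting the two state equations gives
\begin{equation*}
\frac{d}{dt}(x-x') = A(x-x') + (A-A')x' + (B-B')u, \qquad (x-x')(0) = x_0-x_0'.
\end{equation*}
Integrating and applying Gr\"onwall's lemma in $L^\infty$ produces
\begin{equation*}
\|x-x'\|_{C([0,T],\R^n)} \le e^{T\|A\|}\bigl(\|x_0-x_0'\| + T\|A-A'\|\,\|x'\|_{C([0,T],\R^n)} + T\|B-B'\|\,\|u\|_{C([0,T],\R^m)}\bigr).
\end{equation*}
A separate Gr\"onwall argument applied to the equation for $x'$ itself bounds $\|x'\|_{C([0,T],\R^n)}$ in terms of $\|x_0'\|,\|B'\|,\|u\|_\infty, T$ and $\|A'\|$. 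Finally, since the map $w \mapsto A = (J-R)Q$ is polynomial (hence locally Lipschitz) with $\|A-A'\| \le \|Q\|\,\|(J-R)-(J'-R')\| + \|J'-R'\|\,\|Q-Q'\|$, one concludes the $L^\infty$-Lipschitz bound
\begin{equation*}
\|x-x'\|_{C([0,T],\R^n)} \le C_1 \|w-w'\|_{\Uad},
\end{equation*}
with $C_1$ depending on $T$, $\|u\|_\infty$, and the norms of $w,w'$.

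To upgrade to the $H^1$ norm one uses the state equation directly:
\begin{equation*}
\dot x - \dot x' = A(x-x') + (A-A')x' + (B-B')u,
\end{equation*}
so $\|\dot x-\dot x'\|_{L^2} \le \|A\|\,\|x-x'\|_{L^2} + \|A-A'\|\,\|x'\|_{L^2} + \sqrt{T}\,\|B-B'\|\,\|u\|_\infty$, and combining with the $L^2 \hookleftarrow L^\infty$ bound on $x-x'$ yields the desired estimate in $H^1$. The only real subtlety is bookkeeping the dependence of the constant on $\|w\|,\|w'\|$ (through $\|A\|$ and the exponential factor $e^{T\|A\|}$); this is the main technical point but poses no genuine obstacle since the statement permits $C$ to depend on $w, w'$. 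The positivity/skew-symmetry conditions in $\Uad$ play no role in this argument and will only enter later when discussing input-to-state stability and the existence of optimal controls.
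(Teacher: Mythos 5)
Your proof is correct and follows essentially the same route as the paper: existence and uniqueness from standard linear ODE theory, then a Gr\"onwall estimate on the difference $x-x'$ followed by reading the bound on $\frac{\dd}{\dd t}(x-x')$ off the equation to control the $H^1$ norm. If anything, your version is slightly more careful than the paper's (you retain the $(B-B')u$ term and explicitly bound $\|x'\|_{C([0,T],\R^n)}$, whereas the paper's displayed estimates elide these), but the underlying argument is the same.
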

\begin{proof}
For given $w\in\Uad$ and $u \in C([0,T],\R^m)$ the existence of a unique solution to \eqref{eq:state} follows by standard ODE theory \cite{teschl}. For the second statement we estimate $\| x - x' \|_{L_2((0,T),\R^n)}$ and $\| \frac{\dd}{\dd t}x - \frac{\dd}{\dd t} x' \|_{L_2((0,T),\R^n)}$ separately. 

We obtain
\begin{align*}
| x(t) - x'(t) |^2 &\le 2| x_0 - x_0'|^2 + 2T\int_0^t | (J-R)Q x(s) - (J' -R')Q' x'(s) |^2 \dd s \\
&\le C_1 | w - w' |^2 + \int_0^t C_2 | x(s) - x'(s) |^2 \dd s
\end{align*}
An application of Gronwall inequality yields
\[
|x(t) - x'(t)|^2 \le C_3 |w-w'|^2.
\]
Integration over $[0,T]$ we obtain
\begin{equation}\label{eq:est1}
\| x - x' \|_{L^2((0,T),\R^n)}^2 \le C_4 |w-w'|^2.
\end{equation}
Moreover, we obtain for
\begin{align*}
|\frac{\dd}{\dd t}  \big(x(t) - x'(t)\big)|^2 &= C_5 |w-w'|^2 + C_6\int_0^t |\frac{\dd}{\dd s}  \big(x(s) - x'(s)\big)|^2 \dd s. 
\end{align*}
Again, Gronwall and integration over $[0,T]$ yields
\begin{equation}\label{eq:est2}
\| \frac{\dd}{\dd t} \big( x(t) - x'(t) \big) \|_{L^2((0,T),\R^n)}^2 \le C_7 |w-w'|^2
\end{equation}
Adding \eqref{eq:est1} and \eqref{eq:est2} and taking square root yield the desired result.
\end{proof}

The well-posedness of the state equation allows us to introduce the \textit{solution operator} $$S \colon \Uad \rightarrow H^1((0,T),\R^n), \qquad S(w) = x.$$ Moreover, we define the reduced cost functional 
$$\hat \J(w) = \frac{1}{2} \int_0^T | B^\top QS(w)(t) - \yd(t) |^2 \dd t + \frac{\lambda}{2} |w-w_\text{ref}|^2. $$
Both will be helpful in the proof of the well-posedness result of the identification problem and also play a major role in the derivation of the gradient-descent algorithm later on. 

The next ingredient for the well-posedness result is the weak convergence of the state operator
\begin{equation}\label{eq:stateOperator}
e \colon H^1((0,T),\R^n) \times \Uad \rightarrow L^2((0,T),\R^n) \times \R^n, \qquad (x,w) \mapsto \begin{pmatrix} \frac{\dd}{\dd t} x - (J-R)Qx - Bu \\
x(0) - x_0 \end{pmatrix}.
\end{equation}
\begin{lemma}\label{lem:weakcontstate}
The state operator $e$ defined in \eqref{eq:stateOperator} is weakly continuous.
\end{lemma}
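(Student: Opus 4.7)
The plan is to exploit two structural features: first, that $\Uad$ sits inside the finite-dimensional space $W$, so that weak and strong convergence coincide there; second, that in one space variable the embedding $H^1((0,T),\R^n) \hookrightarrow C([0,T],\R^n)$ is compact. Together with the boundedness of the time derivative as an operator $H^1 \to L^2$, this is enough to handle every term in $e$.

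Concretely, I would take a weakly convergent sequence $(x_k, w_k) \rightharpoonup (x,w)$ in $H^1((0,T),\R^n) \times \Uad$ and argue term by term. From finite-dimensionality of $W$ I immediately get $w_k \to w$ in norm, hence $J_k \to J$, $R_k \to R$, $Q_k \to Q$, $B_k \to B$, $x_{0,k}\to x_0$. From compact embedding, $x_k \to x$ strongly in $C([0,T],\R^n)$, which in particular yields $x_k \to x$ in $L^2((0,T),\R^n)$ and $x_k(0) \to x(0)$ in $\R^n$. Since $\tfrac{\dd}{\dd t}\colon H^1 \to L^2$ is bounded and linear, $\tfrac{\dd}{\dd t} x_k \rightharpoonup \tfrac{\dd}{\dd t} x$ in $L^2((0,T),\R^n)$.

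For the bilinear term $(J_k-R_k)Q_k x_k$ I would use the triangle inequality
\[
\|(J_k-R_k)Q_k x_k - (J-R)Qx\|_{L^2} \le \|(J_k-R_k)Q_k\|\,\|x_k-x\|_{L^2} + \|(J_k-R_k)Q_k - (J-R)Q\|\,\|x\|_{L^2},
\]
where both factors in each product go to zero by the two previous paragraphs (the operator norms $\|(J_k-R_k)Q_k\|$ are uniformly bounded because $w_k$ converges). Analogously $B_k u \to B u$ in $C([0,T],\R^m)$, hence in $L^2$. Adding up, the first component of $e(x_k,w_k)$ decomposes into one weakly convergent summand ($\tfrac{\dd}{\dd t}x_k$) plus two strongly convergent summands, and therefore converges weakly in $L^2((0,T),\R^n)$ to the first component of $e(x,w)$. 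The second component $x_k(0)-x_{0,k}$ converges in $\R^n$.

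There is no genuine obstacle here since the nonlinearity is only bilinear and one factor $w_k$ enjoys finite-dimensional convergence while the other $x_k$ enjoys compactness; the only point requiring a moment of care is the bilinear product, but that is handled by the split above. The result is that $e(x_k,w_k) \rightharpoonup e(x,w)$, which is exactly the weak continuity claimed.
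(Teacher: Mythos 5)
Your proof is correct and follows essentially the same route as the paper: strong convergence of $w_k$ from finite-dimensionality of $W$, the embedding $H^1((0,T),\R^n)\hookrightarrow C([0,T],\R^n)$ for the initial-value component, weak convergence of the derivative, and a term-by-term split of the bilinear part. The only cosmetic difference is that you invoke compactness of the embedding to get strong $L^2$-convergence of $x_k$ in the bilinear term, whereas the paper tests against a fixed $\varphi\in L^2$ and uses weak convergence of $x_k$ together with boundedness; both are valid.
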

\begin{proof}
We consider $\{x_n\}_n \subset H^1((0,T),\R^n)$ with $x_n \rightharpoonup x$ and $\{w_n\}_n \subset \Uad$ with $w_n \rightharpoonup w.$ Note that $\Uad$ is finite dimensional, thus weak and strong convergence coincide. Then for $\varphi\in L^2((0,T),\R^n)$ we obtain 
\begin{align*}
&\int_0^T \left( \frac{\dd}{\dd t} x_n - (J_n-R_n)Q_n x_n(t) - B_n u(t)\right) \cdot \varphi(t) dt - \int_0^T \left( \frac{\dd}{\dd t} x - (J-R)Q x(t) - B_n u(t)\right) \cdot\varphi(t) dt  \\
&= \int_0^T \left( \frac{\dd}{\dd t} x_n - \frac{\dd}{\dd t}x \right) \cdot\varphi (t)\, \dd t - \int_0^T((J_n - J) - (R_n-R))Q_n x_n(t) \, \cdot\varphi(t) \dd t \\
&\qquad - \int_0^T (J-R)(Q_n-Q) x_n(t) \cdot \varphi(t) + (J-R)Q(x_n(t)-x(t)) \cdot \varphi(t) + (B_n - B) u(t) \cdot \varphi(t) \dd t.
\end{align*}
By the weak convergence of $x_n$ and $w_n$  we conclude that all terms tends to zero as $n\rightarrow \infty$. Note that weak convergence of $\{x_n\}_n$ implies boundedness of the sequence $\{x_n\}_n$. Moreover, by the embedding $H^1((0,T),\R^n) \hookrightarrow C([0,T],\R^n)$ we conclude that $x_n(0) \rightarrow x(0)$ and further we obtain $(x_0)_n \rightarrow x_0$ by the convergence of $w_n.$ Altogether, this yields the desired result.
\end{proof}

\begin{theorem}\label{opt_wellposedness}
Let $\lambda > 0$ or $\Uad$ bounded. Then \eqref{opt_prob} admits a global minimum.
\end{theorem}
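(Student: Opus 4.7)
I would apply the direct method of the calculus of variations to the reduced cost functional $\hat\J$, which encodes Problem~\ref{problem} via the solution operator $S$. The four ingredients are: (i) a lower bound so that the infimum is finite; (ii) coercivity in $w$ so that minimizing sequences are bounded; (iii) compactness combined with continuity so that the limit of a subsequence is attained; (iv) closedness of $\Uad$ under this convergence.

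First I would fix a feasible point, for instance $w^\circ = (0,I,I,0,0) \in \Uad$, to see that $\inf_{\Uad}\hat\J \le \hat\J(w^\circ) < \infty$, and since $\hat\J \ge 0$, the infimum $m \ge 0$ is a well-defined real number. Pick a minimizing sequence $\{w_n\}_n \subset \Uad$ with $\hat\J(w_n) \to m$. The regularization $\J_2(w) = \frac{\lambda}{2}|w - w_\text{ref}|^2$ with $\lambda > 0$ is coercive on the finite-dimensional space $W$, so eventually $\frac{\lambda}{2}|w_n - w_\text{ref}|^2 \le m+1$, which bounds $\{w_n\}_n$ in $W$.

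Next, since $W$ is finite-dimensional, Bolzano-Weierstrass yields a (not relabelled) subsequence with $w_n \to w^\star$ strongly in $W$. The constraints $J^\top = -J$ and $R \succcurlyeq 0$ are closed, hence pass to the limit. Applying Lemma~\ref{lem:contdep} (or alternatively Lemma~\ref{lem:weakcontstate} combined with the compact embedding $H^1 \hookrightarrow C([0,T],\R^n)$), we get $x_n = S(w_n) \to x^\star := S(w^\star)$ in $H^1((0,T),\R^n)$. From $B_n \to B^\star$ and $Q_n \to Q^\star$ together with this strong convergence of $x_n$, the outputs $y_n = B_n^\top Q_n x_n$ converge to $y^\star = (B^\star)^\top Q^\star x^\star$ in $L^2((0,T),\R^m)$. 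Continuity of $\J_1$ on $L^2$ and of $\J_2$ on $W$ then gives
\[
\hat\J(w^\star) = \J_1(y^\star) + \J_2(w^\star) = \lim_{n\to\infty} \hat\J(w_n) = m,
\]
so $w^\star$ attains the infimum.

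The only delicate point, and the place where the argument is not completely routine, is the closedness of $\Uad$: the condition $Q \succ 0$ is an \emph{open} constraint, so the strong limit $Q^\star$ is only guaranteed to satisfy $Q^\star \succcurlyeq 0$. I would handle this either by interpreting $\Uad$ with its natural closure (replacing $Q \succ 0$ by $Q \succcurlyeq 0$, which still fits the PHS framework), or by imposing the uniform bound $Q \succcurlyeq cI$ for some $c>0$ small enough so as not to exclude the minimizer but to make $\Uad$ closed. With either adjustment, $w^\star \in \Uad$ follows and the existence of a global minimum is established.
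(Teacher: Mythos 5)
Your proposal is correct and, while it follows the same direct-method skeleton as the paper (minimizing sequence, coercivity from $\lambda>0$, extraction of a convergent subsequence in the finite-dimensional space $W$), the way you identify the limit is genuinely different and more elementary. The paper only extracts a \emph{weakly} convergent subsequence of states $S(w_{n_{k_\ell}}) \rightharpoonup x$ in $H^1$, then invokes the weak continuity of the state operator $e$ (Lemma~\ref{lem:weakcontstate}) together with weak lower semicontinuity of the $L^2$-norm to conclude $x=S(w)$, and finally uses weak lower semicontinuity of $\hat\J$ to get $\hat\J(w)\le \lim \hat\J(w_{n_{k_\ell}})$. You instead exploit the Lipschitz estimate of Lemma~\ref{lem:contdep} to obtain \emph{strong} convergence $S(w_n)\to S(w^\star)$ in $H^1$, so that plain continuity of $\J_1$ and $\J_2$ yields $\hat\J(w^\star)=\lim_n \hat\J(w_n)$ with equality; this bypasses Lemma~\ref{lem:weakcontstate} and the lower-semicontinuity step entirely. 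One caveat to make explicit: the constant in Lemma~\ref{lem:contdep} comes from a Gronwall argument and depends on the matrices in $(w,w')$, so you should remark that it can be chosen uniformly along the bounded minimizing sequence. Finally, you correctly flag a point the paper passes over in silence: $\Uad$ is not closed, since $Q\succ 0$ is an open condition, so the limit $Q^\star$ is a priori only positive semi-definite and $w^\star$ may fail to lie in $\Uad$. Your two remedies (relaxing the constraint to $Q\succcurlyeq 0$, or imposing a uniform bound $Q\succcurlyeq cI$ with $c>0$) are the standard fixes, and one of them is genuinely needed to make either your argument or the paper's complete.
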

\begin{proof}
We follow the lines of \cite{troelzsch,hinze2008optimization} and consider a minimizing sequence $\{w_n\}_n,$ that means $\lim\limits_{n\rightarrow \infty}\hat \J(w_n) = \inf\limits_{w\in\Uad} \hat \J(w).$

If $\lambda>0$, $\hat \J(w)$ is coercive w.r.t.~$w$ and hence the minimization sequence is bounded. In the other case the minimizing sequence is bounded by assumption. As $\Uad$ is a subset of a reflexive, finite-dimensional space, we can extract a convergent subsequence $\{w_{n_k}\}_k$ with $w_{n_k} \rightarrow w^*.$ Note that $w=0$ implies $z\equiv0,$ hence the continuous dependence on the data shown in Lemma~\ref{lem:contdep} implies the boundedness of $\{S(w_{n_k})\}_k \subset H^1((0,T),\R^n)$ and we can extract a weakly convergent subsequence $S(w_{n_{k_\ell}}) \rightharpoonup x$ in $H^1((0,T),\R^n).$ The weak continuity of the state operator shown in Lemma~\ref{lem:weakcontstate} yields
\[
e(S(w_{n_{k_\ell}}),w_{n_{k_\ell}}) \rightharpoonup e(x,w) 
\]
and the weak lower semicontinuity of the norm allows us to estimate 
\[
\| e(x,w) \|_{L^2((0,T),\R^n) \times \R^n} \le \liminf\limits_{\ell \rightarrow \infty} \| e(S(w_{n_{k_\ell}}),w_{n_{k_\ell}}) \|_{L^2((0,T),\R^n)\times \R^n} = 0,
\]
which proves that $S(w)=x.$ We note that the weak lower semicontinuity of $\J$ implies the weak lower semicontinuity of $\hat\J,$ hence
\[
\hat\J(w) \le \lim\limits_{\ell\rightarrow\infty} \hat\J(w_{n_{k_\ell}}) = \inf\limits_{w\in\Uad} \hat\J(w).
\]
This proves the existence of a minimizer.
\end{proof}

\begin{remark}
We want to emphasize that even though the state equation is linear in $x$, it is non-linear in $w$. Thus in general, the identification problem is nonconvex and  we cannot expect to obtain a uniqueness result for \eqref{opt_prob}.
\end{remark}

Here, we aim for a general approach, also feasible for high-dimensional systems, and therefore discuss an adjoint-based approach in the following. A challenge in this context is the structure of the system matrices \eqref{eq:matrixproperties} which we aim to preserve in each step of the identification process. To this end, we employ techniques from optimization on manifolds \cite{absil2008optimization} to compute the respective retractions for the system matrices to preserve their structure in each iteration. To the authors' knowledge, there is no explicit formula for the retraction in the space of semi-definite matrices. Hence, we use the flat metric for $R$ and as the space of skew-symmetric matrices as well as $\R^{n\times m}$ and $\R^n$ are vector spaces, we are naturally in the flat case for $J$, $B$ and $x_0,$ respectively. 

One of our objectives is the derivation of a gradient-based algorithm for the identification. We derive the adjoint-based gradient descent scheme step-wise, i.e., we discuss the computation of the first-order optimality system and then show that the optimality condition of this system coincides with the gradient of the reduced cost functional.

\section{First-order optimality system}
The main goal of this section is the derivation of the gradient of the reduced cost functional $\hat\J$. We follow a standard approach from optimization with differential equations, see for example \cite{hinze2008optimization} and use the Lagrangian of the constrained dynamics, which is given by
\[
L(x,w,p) = \J(x,w) - \langle e(x,w), p \rangle,
\]
where $p$ denotes the Lagrange multiplier or adjoint variable. The first-order optimality system is derived by solving $dL=0$. Hence we compute
\begin{align*}
d_x L(x,w,p)[h_x] &= d_x\J(x,w)[h_x] - \int_0^T d_xe(x,w)[h_x] \cdot p \dd t - h_x(0) \cdot p_0 \\
&= \int_0^T QB(y-\yd) \cdot h_x + \frac{d}{dt} p + Q(J^\top - R^\top) p \cdot h_x \dd t - h_x(0) \cdot p_0.
\end{align*}
As $d_x L(x,w,p)[h_x] = 0$ must hold for arbitrary $h_x$ we can identify the adjoint equation as
\begin{equation}\label{eq:adjoint}
-\frac{\dd}{\dd t} p = Q(J^\top - R^\top) p + QB(y- y_\mathrm{data}), \qquad p(T)=0.
\end{equation}
The derivative $d_p L(x,w,p)[h_p] = 0$ yields the state equation and the optimality condition is derived as
\begin{equation*}
d_w L(x,w,p)[h_w] = d_w\J(x,w)[h_w] - \int_0^T d_we(x,w)[h_w] \cdot p \dd t.
\end{equation*}
For simplicity we split the derivation into the different matrices, to obtain
\begin{align*}
0&=\nabla_Q \J_2(w) + \int_0^T x(t) \otimes \big((J^\top - R^\top)p(t)\big) + x(t) \otimes (B(y(t)-\yd(t))) \dd t, \\
0&=\nabla_J \J_2(w) + \int_0^T p(t) \otimes (Qx(t)) \dd t, \\
0&=\nabla_R \J_2(w) - \int_0^T p(t) \otimes (Qx(t)) \dd t, \\
0&=\nabla_B \J_2(w) + \int_0^T \left( p(t) \otimes u(t) + (Qx(t)) \otimes (y(t)-\yd(t))  \right) \dd t, \\
0&=\nabla_{x_0} \J_2(w) + p(0).
\end{align*}

Altogether, we obtain the following result.
\begin{theorem}\label{thm:optconditions}
A stationary point $\bar w = (\bar Q, \bar R, \bar J, \bar B, \bar x_0)$ of Problem~\ref{problem} satisfies the  optimality condition
\begin{subequations}\label{eq:gradient}
\begin{align}
&\nabla_Q \J_2(\bar w) + \int_0^T \bar x\otimes \bar p  \,dt = 0, \\
&\nabla_R \J_2(\bar w) -\int_0^T \left( \bar x \otimes \bar R \bar p + \bar p \otimes \bar R \bar x \right)  \, dt  =0, \\
&\nabla_J \J_2(\bar w) + \int_0^T \bar p \otimes \bar x \,\dd t = 0, \\
&\nabla_{x_0} \J_2(\bar w) +\bar p(0) = 0, \\
&\nabla_{B} \J_2(\bar w) +\int_0^T \left( \bar Q\bar x \otimes (\bar y-y_\textrm{data}) +  \bar Q^\top \bar B(\bar y-y_\mathrm{data}) \otimes \int_0^t u(s) ds \right) \, dt =0,
\end{align}
\end{subequations}
where $\bar p$ satisfies the adjoint equation
\begin{equation}
-\frac{\dd}{\dd t} \bar p = \bar Q(\bar J^\top - \bar R^\top) \bar p + \bar Q\bar B(\bar y- y_\mathrm{data}), \qquad \bar p(T)=0.
\end{equation}
and $\bar x$ the state equation with output $\bar y$ given by
\begin{align*}
\frac{d}{dt} \bar x &= (\bar J-\bar R) \bar Q\bar x + \bar Bu, \qquad \bar x(0)=\bar x_0, \\
\bar y &= \bar B^\top \bar Q \bar x,
\end{align*}
where $a \otimes b$ denotes the dyadic product $a b^\top$ for $a,b\in\R^n$.
\end{theorem}

To stay on the respective manifolds in each iteration of the identification procedure, we use the retractions from manopt \cite{manopt} in the implementation.

\subsection{Relationship to the gradient of the reduced cost functional}
We are now well-prepared to identify the gradient of our identification problem. We discuss the case including the estimation of the initial state, if this is needless the gradient update with respect to $z_0$ can be neglected. We have already seen that the solution operator allows us to define the reduced cost function which which leads us to an reformulation of \eqref{opt_prob} as unconstrained optimization problem, i.e., the state constraint is treated implicitly. The gradient we derive in the following is the one of the reduced cost functional. In fact, the whole gradient-descent algorithm will only vary $w$ and thereby vary the state implicitly.

Let $w = (Q,R,J,B,z_0)$ be the parameters or, to be more precise, the matrices and initial data to be identified. We denote by $\langle \cdot, \cdot \rangle_{W^*,W} \colon W^*,W \rightarrow \R$ the dual pairing of $W$ and its dual $W^*.$ Moreover, $\mathcal A^*$ denotes the adjoint of the operator $\mathcal A.$  To identify  the gradient of the reduced cost functional we first note that the state operator yields
\[
 0= d_y e(S(w),w) S'(w)[h]+ d_w e(S(w),w)[h] \quad \Rightarrow \quad S'(w)[h] = -d_y e(S(w),w)^{-1} d_w e(S(w),w)[h].
\]
This allows us to find
\begin{align*}
\langle \hat\J'(w) , h \rangle_{W^*,W} &= \langle d_y \J(y,w), B^\top Q S'(w)[h] \rangle_{H^{-1},H^1} + \langle d_w \J(y,w), h\rangle_{W^*,W} \\
&= \langle d_w \J(y,w) -d_w e(S(w),w)^* d_y e(S(w),w)^{-*} [QB\,d_y \J(y,w)], h \rangle_{W^*,W}
\end{align*}
Since $W$ is a Hilbert space and $h$ was chosen arbitrarily, Riesz representation theorem allows us to identify
\begin{equation}\label{eq:riesz}
(\nabla \J(w),h)_W =  \langle d_w \J(y,w) -d_w e(S(w),w)^* d_y e(S(w),w)^{-*} [QB\,d_y \J(y,w)], h \rangle_{W^*,W}.
\end{equation}

In the previous subsection we computed the adjoint equation already and the gradient components corresponding $(Q,R,J,B,x_0)$, respectively. 

We emphasize that the computation \eqref{eq:riesz} shows that the left-hand side of system \eqref{eq:gradient} is in fact the gradient of the reduced cost functional at $\bar w$. We will use this information in the gradient-descent algorithm proposed in the following section.

\section{Algorithm and numerical schemes}
An optimal solution has to satisfy the conditions in Theorem~\ref{thm:optconditions} all at once. However, due to the forward/backward structure of coupled state and adjoint equation  it is in general difficult to solve the system directly. This is the reason why we propose an iterative approach based on the gradient \eqref{eq:gradient}. In fact, for an initial guess of systems matrices and initial condition $w$ we solve the state equation \eqref{eq:state} and obtain $S(w),$ using this information we solve the corresponding adjoint equation \eqref{eq:adjoint}. The information of the state and adjoint solutions allows us to evaluate the gradient at $w$ with $-\nabla \hat\J(w)$ we update our initial guess for the second iteration using a step size obtained by Armijo rule with initial stepsize $\sigma_0$ \cite{hinze2008optimization}. The following iterates are based on a conjugate-gradient (CG) update as proposed in \cite{sato2021riemannian}, in more detail, the search direction is given by
\begin{equation}\label{eq:cg-direction}
\eta_{k+1} = -\nabla \J(w_{k+1}) + \beta_{k+1} s_k \mathcal T^{(k)}(\eta_k)
\end{equation}
with scaling parameter
$$s_k = \min\left\{1, \frac{\| \eta_k \|_{w_k}}{\| \mathcal T^{(k)}(\eta_k) \|_{w_{k+1}}}\right\} $$
and transport map $\mathcal T^{(k)}$ that transports the tangent vector at $w_k$ to a similar tangent at $w_{k+1}$. This transport is required in order to have well-defined CG-steps in the manifold setting, see \cite{sato2021riemannian} for more details. The transport maps for the respective manifolds implemented in the manopt package \cite{manopt} are used. In the implementation, we reset the CG-step in every $5$.~iteration.

The iteration is stopped if an appropriate stopping criterion is fulfilled. As stopping criterion we check  the relative cost: First, we can stop if the gradient vanishes numerically, i.e., $\| \nabla \hat\J(w) \| < \e$ for $0< \e \ll 1.$ Another option is to stop the iteration, if the update from one to the next gradient step $\|\nabla \hat\J(w^k) - \nabla\hat \J(w^{k+1})\| < \e$ is sufficiently small. Additionally, we can impose a maximal number of iterations to stop the iteration. We summarize the steps in Algorihm~\ref{alg:gradientdescent}.

\begin{minipage}{0.9\linewidth}
\begin{algorithm}[H]
\caption{Conjugate-gradient algorithm for the identification process}\label{alg:gradientdescent}
\KwData{initial guess $w,$ algorithmic parameters, stopping criterion\;}
\KwResult{optimized matrices and initial data $\bar w$}
\SetCustomAlgoRuledWidth{1cm}
\SetAlgoLined
1) solve state equation $\rightarrow S(w)$\;
2) solve adjoint equation $\rightarrow p$\;
3) evaluate gradient \eqref{eq:gradient}$\rightarrow \nabla\J(w)$\;
4) compute the CG-direction \eqref{eq:cg-direction} $\rightarrow \eta$\; 
5) find admissible step size with Armijo rule $\rightarrow \sigma$\;
6) update control $w \mapsto w + \sigma \eta$\;
7) \textbf{if} stopping criterion is not fulfilled $\rightarrow$ go to 1)\; 
$\quad$\textbf{else} return optimal control\;
\end{algorithm}
\end{minipage}

For the numerical results presented in the next section, we use the following algorithmic parameters unless explicitly stated otherwise: maximal budget of $100$ CG-iterations, initial step size for Armijo rule $1000$. The time step size of the explicit Euler discretizations of the state and adjoint ODE, respectively, is $0.001$ and the upper bound of the time interval considered for the ODEs is $T=1$. The integrals in the gradient expressions are computed with an simple left-endpoint approximation.

In order to avoid numerical errors to interfere with the structure of the matrices, we check for skew-symmetry or symmetry, respectively, and if needed overwrite the new iterate for $J$ by $J \mapsto \frac{1}{2}(J - J^\top)$ and analogous $Q$ by $Q \mapsto \frac{1}{2}(Q + Q^\top)$ and   $R$ by $R \mapsto \frac{1}{2}(R + R^\top).$ As we assume to have no knowledge about the systems, we use no reference values for the tests and set $\J_2(w) \equiv 0.$ 

\begin{figure}[ht!]
    \centering
    \includegraphics[scale=0.45]{./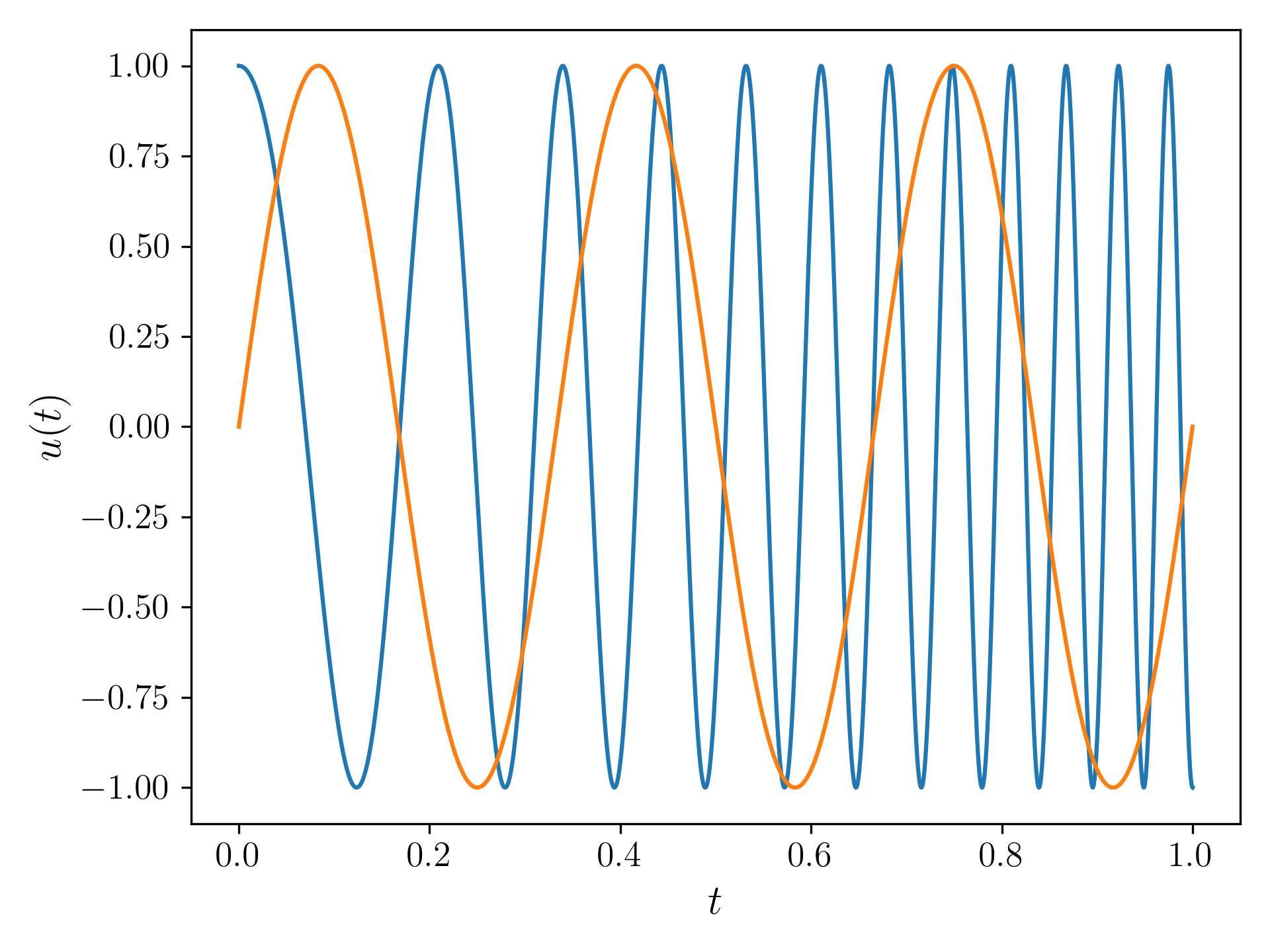}
    \includegraphics[scale=0.45]{./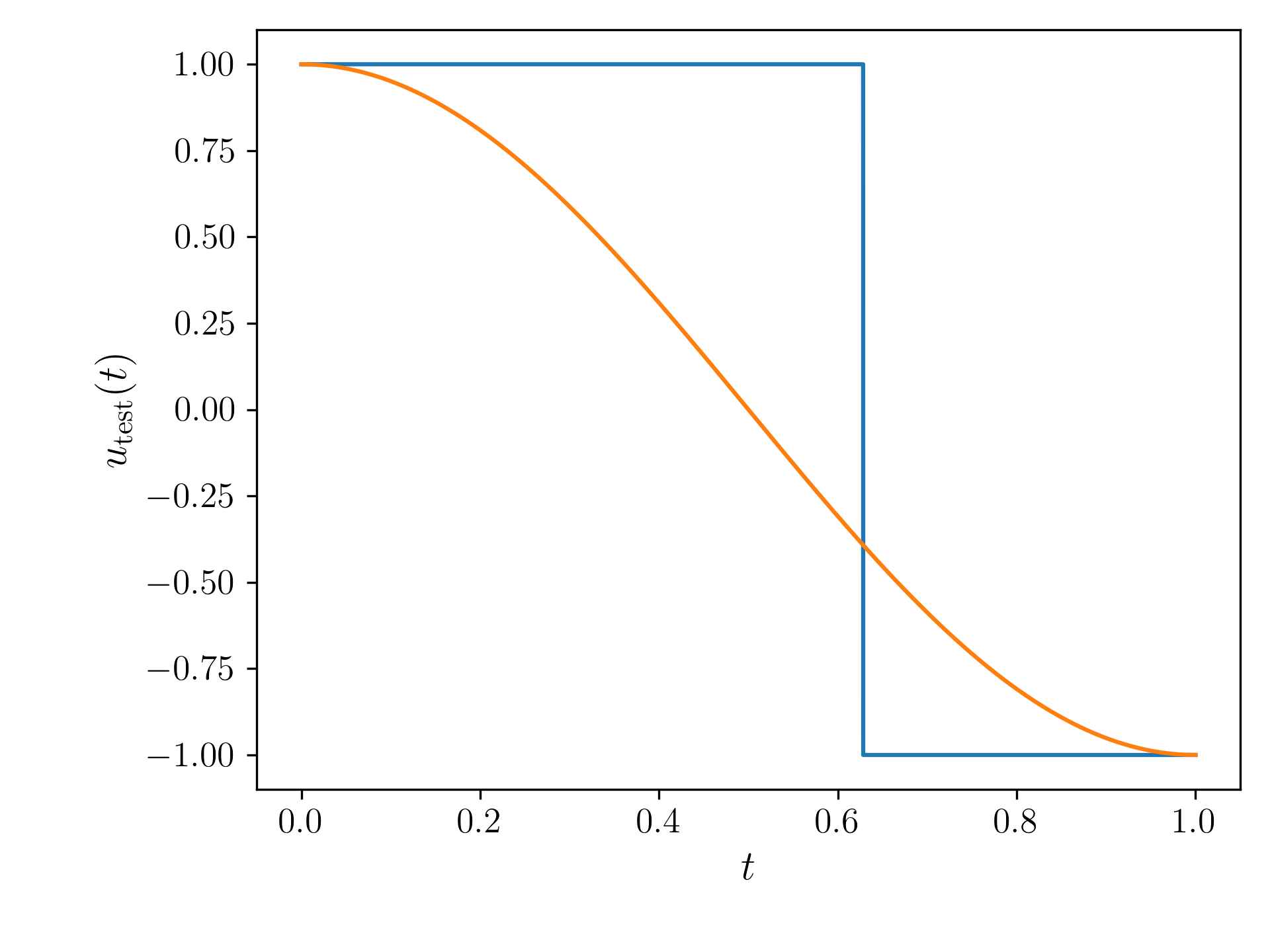}
    \caption{Signal $u$ for the identification (left) and signal $u_\text{test}$ for cross validation (right).}
    \label{fig:inputs}
\end{figure}
Moreover, we set $m=2$ for all the numerical tests and use the inputs (see Figure~\ref{fig:inputs})
\begin{equation*}
u \colon [0,T] \rightarrow \R^2, \qquad u(t) = \begin{pmatrix} \chirp_{3,20}(t) \\ \sin(2\pi f_0 t) \end{pmatrix},
\end{equation*}
where $\chirp$ denotes the chirp-signal from \texttt{scipy.signal} toolbox with frequency $3$ at time $t=0$ and frequency $20$ at time $t=1$ and $f_0 = 3$.

We initialize $B$ with an $\text{diag}(1,3)$ in the first $m\times m$ entries and zero everywhere else. The matrix $Q$ representing the Hamiltonian is initialized as the $n\times n$ identity matrix. For $R$ we set the first $2 \times 2$ zero and to identity and zero everywhere else. Of course it is nontrivial to find a good initial guess for $R$ in the dynamics as in particular the number of dissipative elements is unknown. Furthermore, we assume that the system is in equilibrium if no input applies, therefore we refrain to estimate the initial state $x_0$ and set $x_0 \equiv 0$ in most of the examples. To generate a feasible initial $J$ we draw an $n\times n$ matrix with independent uniformly distributed entries in $[-1,1],$ take the upper part of this matrix (starting with the first off-diagonal) and fill the lower part of the  matrix by the negative of the transpose of the upper part.  

To check the robustness of the identified system, we perform a cross validation using the test signal
\begin{equation*}
u_\text{test} \colon [0,T] \rightarrow \R^2, \qquad u_\text{test}(t) = \begin{pmatrix} \squa_{0.1}(t) \\ \cos(\pi t) \end{pmatrix},
\end{equation*}
where $\squa_{0.1}$ is the square signal from the \texttt{scipy.signal} toolbox with duty parameter $0.1$.

\section{Proof of concept by numerical tests}
To underline the feasibility of the proposed approach, we show three test cases. First, we generate synthetic data and show that Algorithm~\ref{alg:gradientdescent} is able to find system matrices that approximate the data set. Then we repeat the test with a randomly perturbed data set. Our third test case tests the performace of a model order reduction with the proposed algorithm, here we apply the single mass-spring-damper chain from the PHS benchmark collection \cite{PHSbenchmarks}. The code is publicly available on github\footnote{\href{https://github.com/ctotzeck/PHScalibration}{https://github.com/ctotzeck/PHScalibration}}. 

\subsection{Gradient test}
To validate our implementation, we perform a gradient check using the finite differences in some arbitrary direction $h$ with elements in the respective tangent spaces. As we are in the manifold setting, the finite difference reads
\[
\frac{\hat \J(R_w(th)) - \hat \J(w)}{t} \approx (\nabla \hat\J(w),h)_w
\]
with retraction $R_w$ at $w$. Since $w=(J,R,Q,B,x_0)$ the retractions and also the inner product $(\cdot,\cdot)_w$ are defined element-wise. For details we refer to the code \texttt{gradient\_check.py} on github or \cite{manopt}. Also the directions in $h$ depend on the respective manifolds of $J,R,Q,B$ and $x_0$. 

We first test the finite differences where only one of the parameters $J,R,Q,B,x_0$ is varied, then we vary all parameters a once. For both test, the relative difference between the directional derivative computed with inner product and the (sum) of the finite differences is below $2$\%. In more detail, we obtain
\[
\frac{| (\nabla \hat\J(w),h)_w - \sum_K \frac{\hat \J(R_w(th_K)) - \hat \J(w)}{t} |}{|\sum_K \frac{\hat \J(R_w(th_K)) - \hat \J(w)}{t}|} = 0.0104
\]
and
\[
\frac{| (\nabla \hat\J(w),h)_w - \frac{\hat \J(R_w(th)) - \hat \J(w)}{t} |}{|\sum_K \frac{\hat \J(R_w(th)) - \hat \J(w)}{t}|} = 0.00484.
\]

\subsection{Synthetic data (deterministic)}
In the following we show results for a synthetic data set with $n=5$ and $m=2$. First, we fix $Q$ and $x_0$ and fit $J,R$ and $B$. Then we fix all matrices and just fit the initial value of the internal dynamics $x_0$, which is known in practise. 

\subsubsection*{Identification of $J,R, B$ and $Q$}
In this section we fit the system matrices $J,R,B$ and $Q$. In the first experiment, we neglegt $Q$ and in the second test, we include $Q$ and penalize deviations of $Q$ from the identify matrix.

In the first test, we assume to have no information about the system, hence we set $\lambda = 0$ and we fit the matrices $J,R,B$. In particular, we fix a positive-definite matrix $Q$ which is unchanged during the identification. This is justified as $Q$ always multiplies $x$ in the dynamics, hence learning $J,R,B$ will cover all necessary  information of the system. The reference output is generated using random matrices with appropriate structure.  
\begin{figure}[ht!]
    \centering
    \begin{minipage}{0.49\textwidth}
    \includegraphics[scale=0.5]{./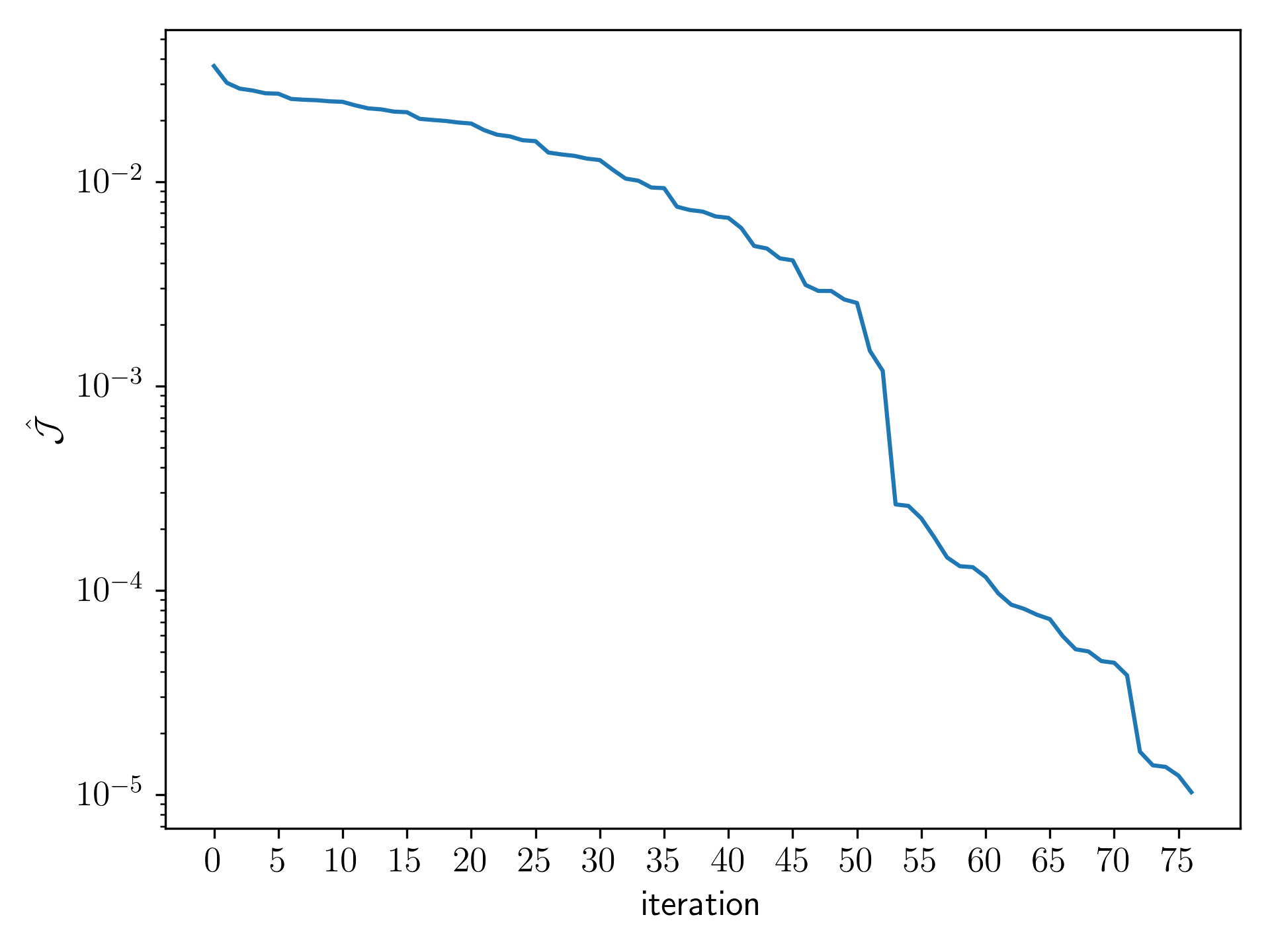}
    \end{minipage}
    \begin{minipage}{0.49\textwidth}
    \begin{tabular}{l | c }
    cost in first iteration & $3.68e^{-2}$  \\
    cost in last iteration & $1.03e^{-5}$ \\
    $\|y - y_\text{ref}\|_\infty$ in first iteration & $7.06e^{-1}$ \\
     $\|y - y_\text{ref}\|_\infty$ in last iteration & $1.01e^{-2}$ \\
    \end{tabular}
    \end{minipage}
    \caption{Left: Evolution of the cost over the iterations of the identification in $\log$-plot. Right: cost values before / after the identification and difference of fitted output and reference output in the maximum norm.}
    \label{fig:cost}
\end{figure}

\begin{figure}[ht!]
    \centering
    \includegraphics[scale=0.4]{./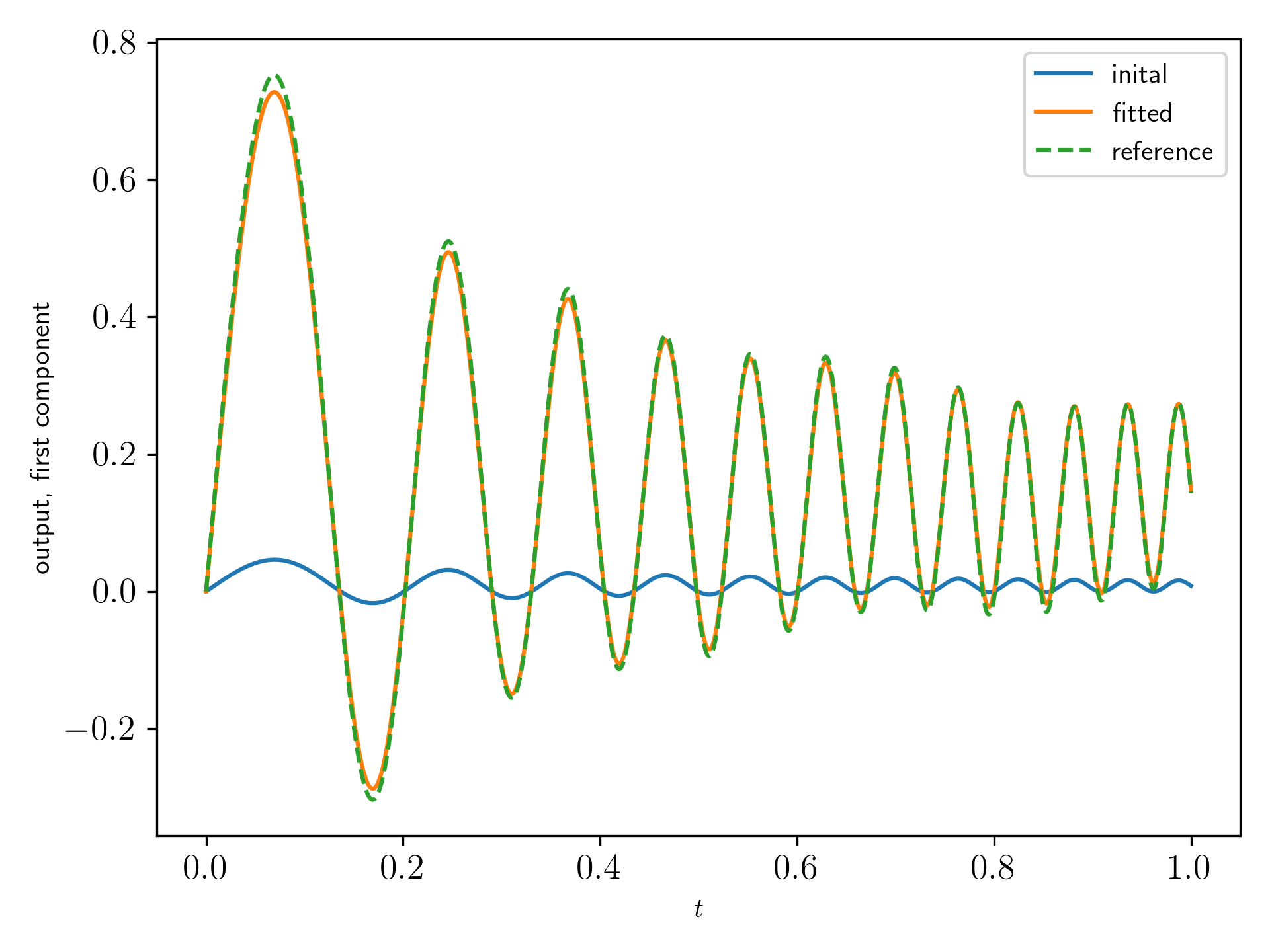}
    \includegraphics[scale=0.4]{./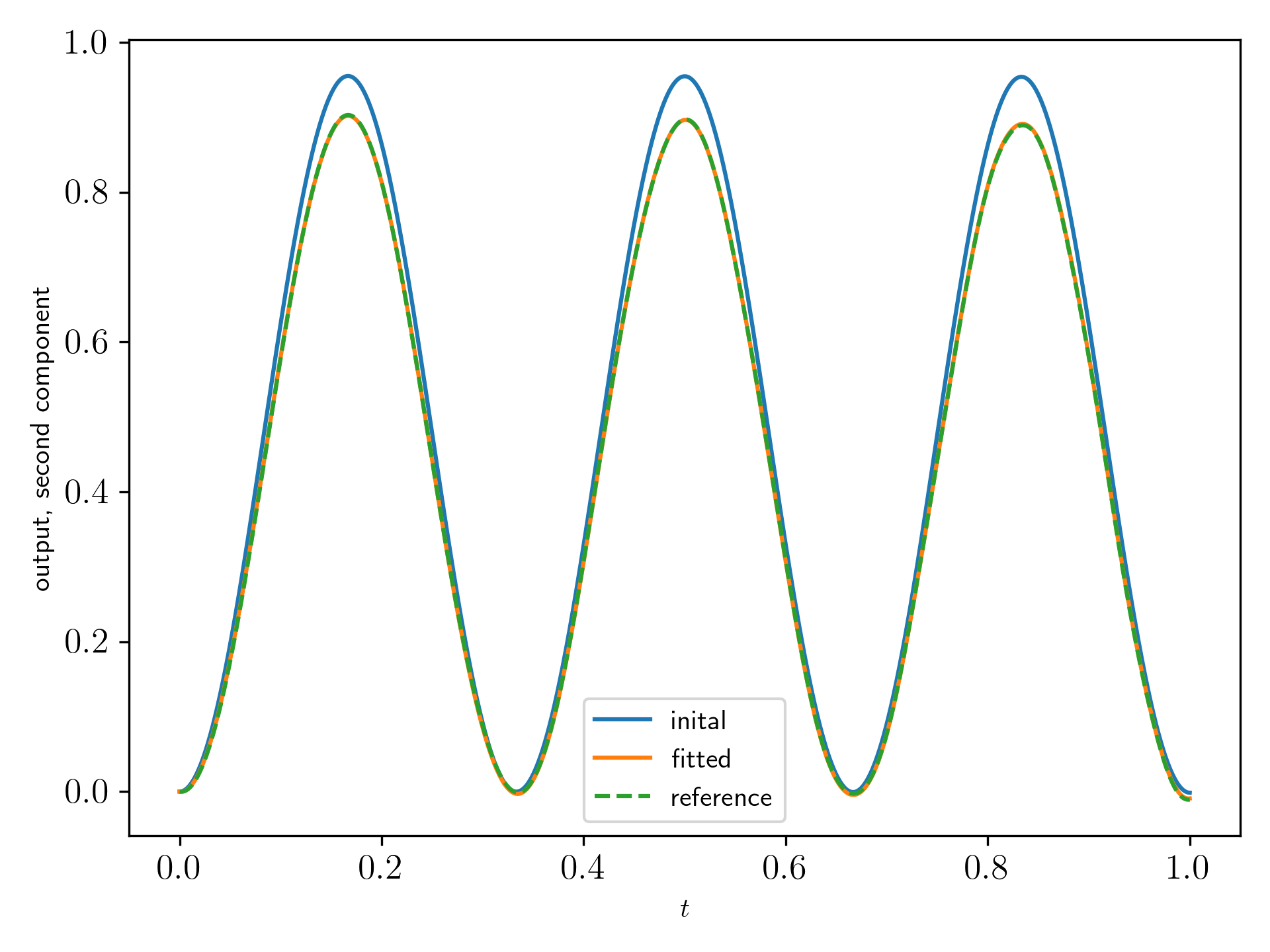}
    \caption{Left: First component of the output for the before the identification (initial), after the identification (fitted) and for the reference matrices (reference) used for the generation of the synthetic data. Right: Second component of the output for the before the identification (initial), after the identification (fitted) and for the reference matrices (reference)used for the generation of the synthetic data.}
    \label{fig:dataoutput_det}
\end{figure}

The algorithm terminated after the maximal number of iterations as can be seen in Figure~\ref{fig:cost}, where we show the evolution of the cost functional on the left-hand side and some additional information of the difference between the fitted and reference output in the maximum norm. Note that we cannot expect higher accuracy, since the error in the cost is of the order of the accuracy of the Euler scheme that we use to solve the state and adjoint systems. The reduction of the cost functional is above $99\%$. 

In Figure~\ref{fig:dataoutput_det} we show the output obtained with the initial matrices before the identification in blue, the orange graph shows the output of the fitted system and the green-dashed line shows the output obtained with the reference data. As the error values above already indicated the fitted output matches the reference output very well.

We close the numerical analysis of the identification of $J,R$ and $B$ with a cross validation test. The input signal $u_\text{test}$ is now applied to the fitted and reference systems and we compare the corresponding output. Figure~\ref{fig:robustness_det} shows the results of this cross validation graphically. In numbers, we obtain for the L2-difference $\| y - y_\text{test}\|_{L^2((0,1);\mathbb R^2)} = 1.63e^{-1}$ and for maximum norm $\| y - y_\text{test}\|_{\infty} = 2.96e^{-1}$. 
\begin{figure}[ht!]
    \centering
    \includegraphics[scale=0.45]{./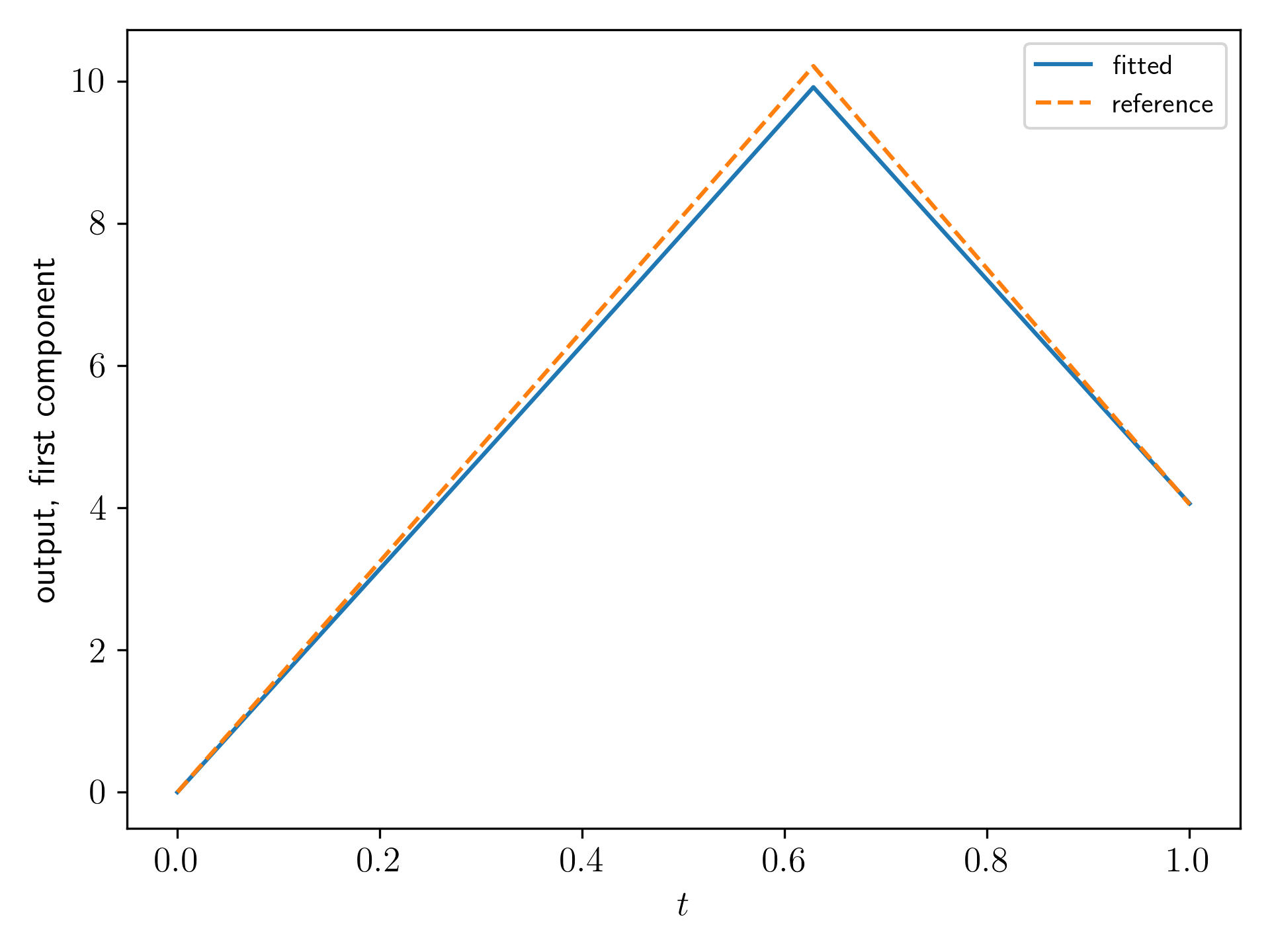}
    \includegraphics[scale=0.45]{./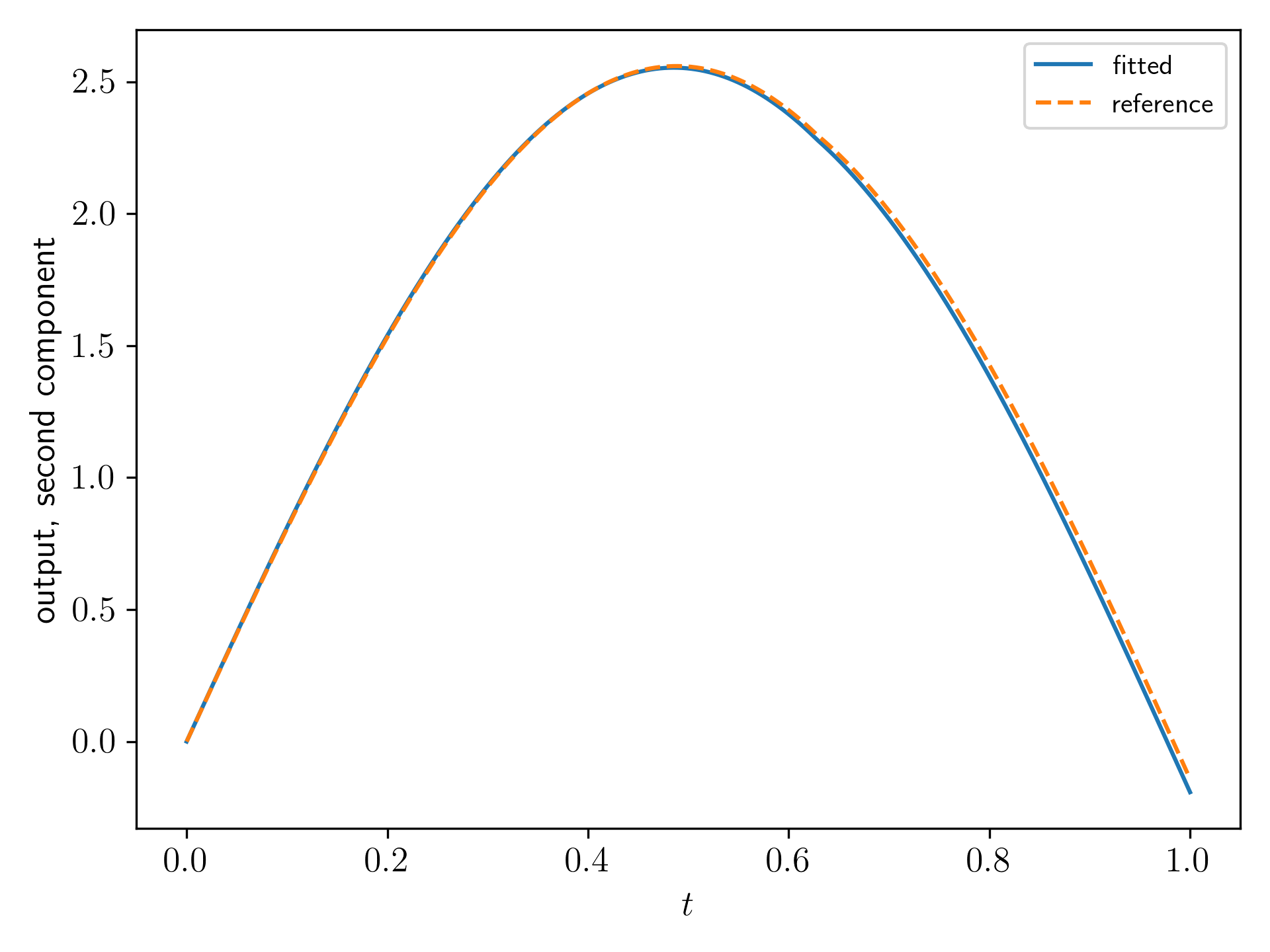}
    \caption{Left: First component of the output obtained in the cross validation. Right: Second component of the output obtained in the cross validation.}
    \label{fig:robustness_det}
\end{figure}

In the following, we learn $Q$ in addition to the other parameters learned above. To stabilize the identification process, we choose $\lambda = 0.001$, and the regularization term $\J_2(Q;I)= \frac12\|\log(Q) \|_F^2$ with derivative $Q^{-1}\log(Q)$ in the cost functional. On the one hand, this additional term stabilizes the identification process on the other hand, it tries to keep $Q$ close to the identify, which is away from the true value \texttt{\text{diag}(1,2,3,4,5)}.

\begin{figure}[ht!]
    \centering
    \begin{minipage}{0.49\textwidth}
    \includegraphics[scale=0.5]{./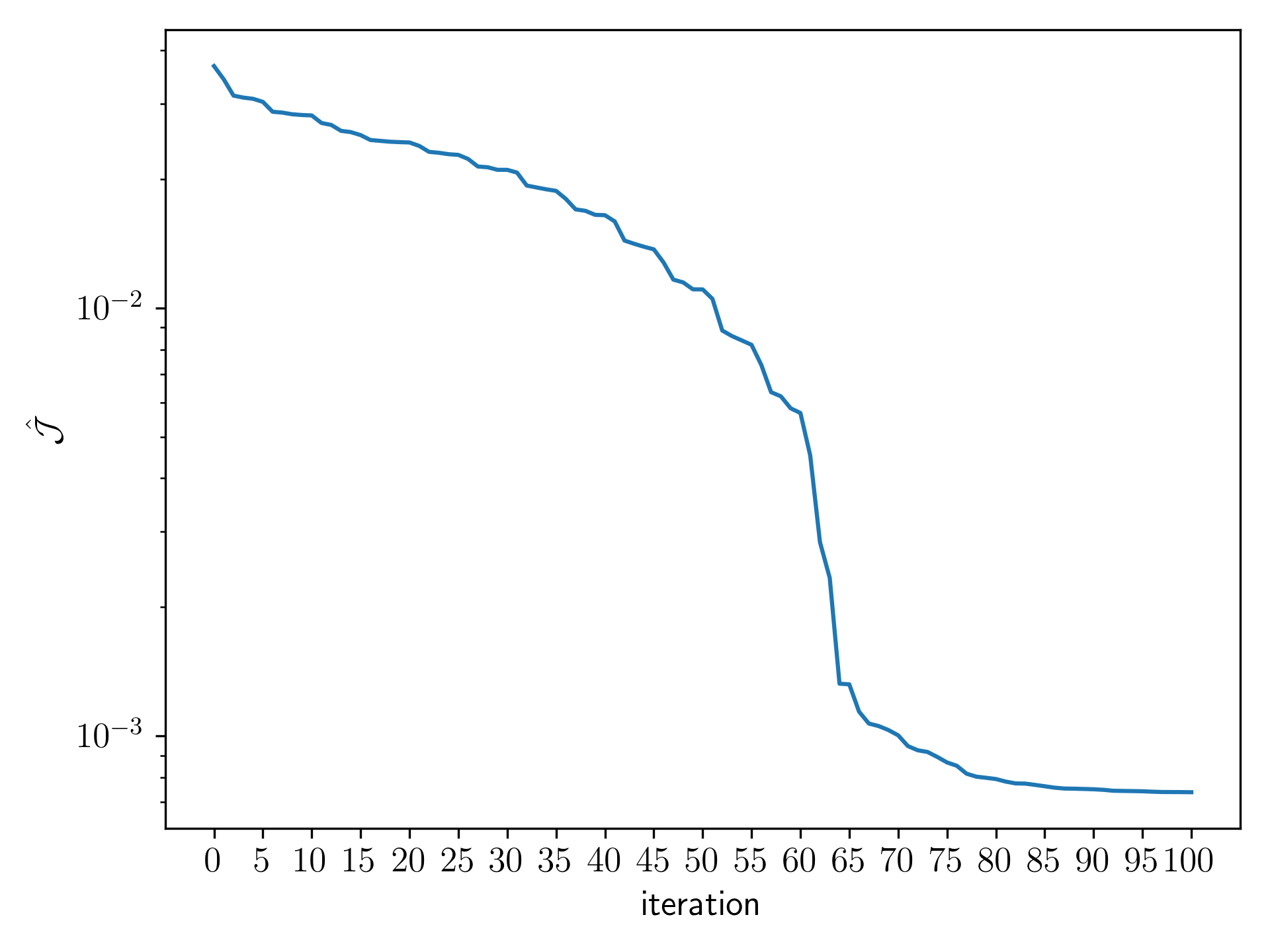}
    \end{minipage}
    \begin{minipage}{0.49\textwidth}
    \begin{tabular}{l | c }
    cost in first iteration & $3.68e^{-2}$  \\
    cost in last iteration & $7.48e^{-4}$ \\
    $\|y - y_\text{ref}\|_\infty$ in first iteration & $7.06e^{-1}$ \\
     $\|y - y_\text{ref}\|_\infty$ in last iteration & $8.08e^{-3}$ \\
    \end{tabular}
    \end{minipage}
    \caption{Left: Evolution of the cost over the iterations of the identification in $\log$-plot. Right: cost values before / after the identification and difference of fitted output and reference output in the maximum norm.}
    \label{fig:cost_lam}
\end{figure}

\begin{figure}[ht!]
    \centering
    \includegraphics[scale=0.4]{./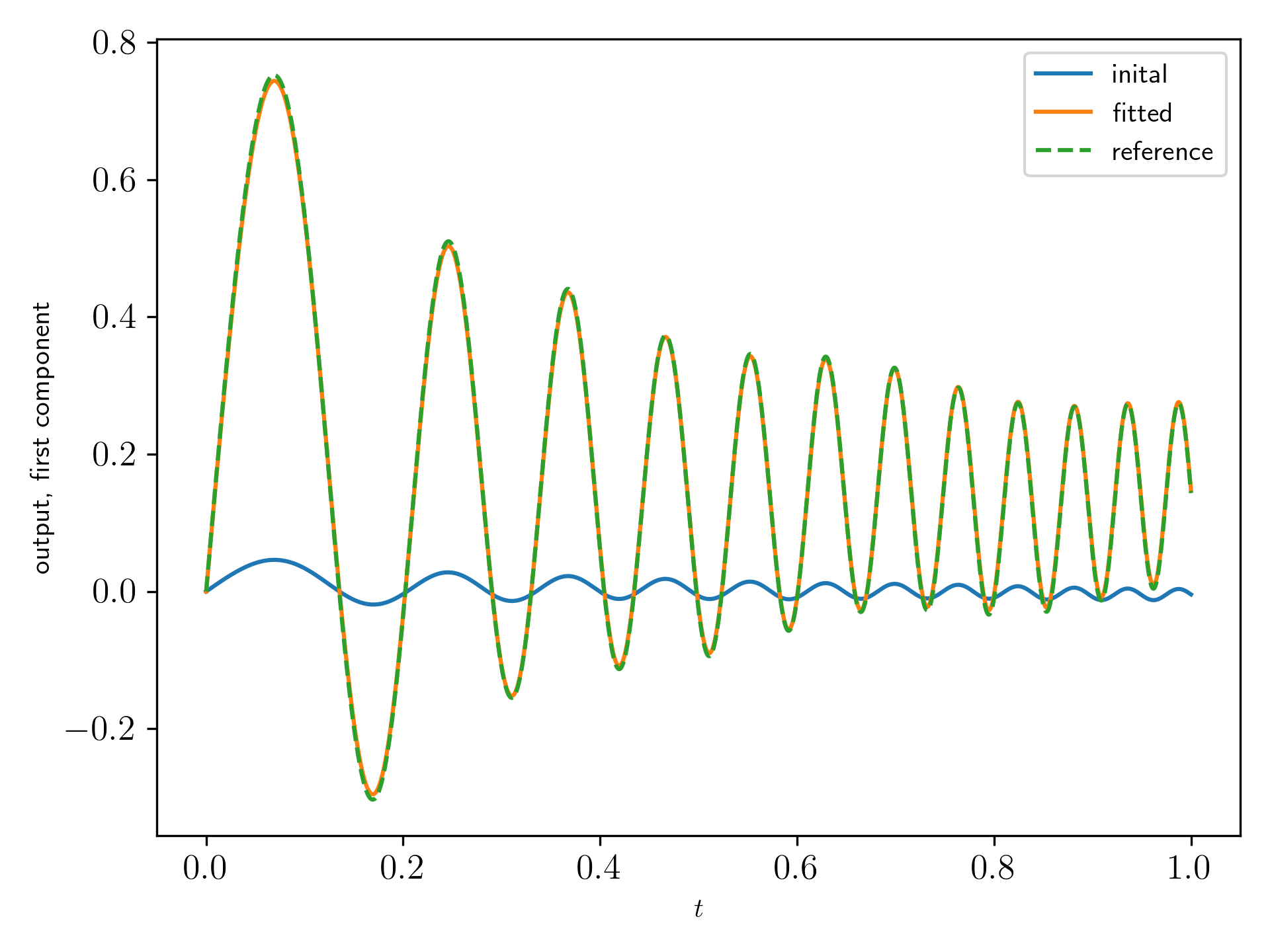}
    \includegraphics[scale=0.4]{./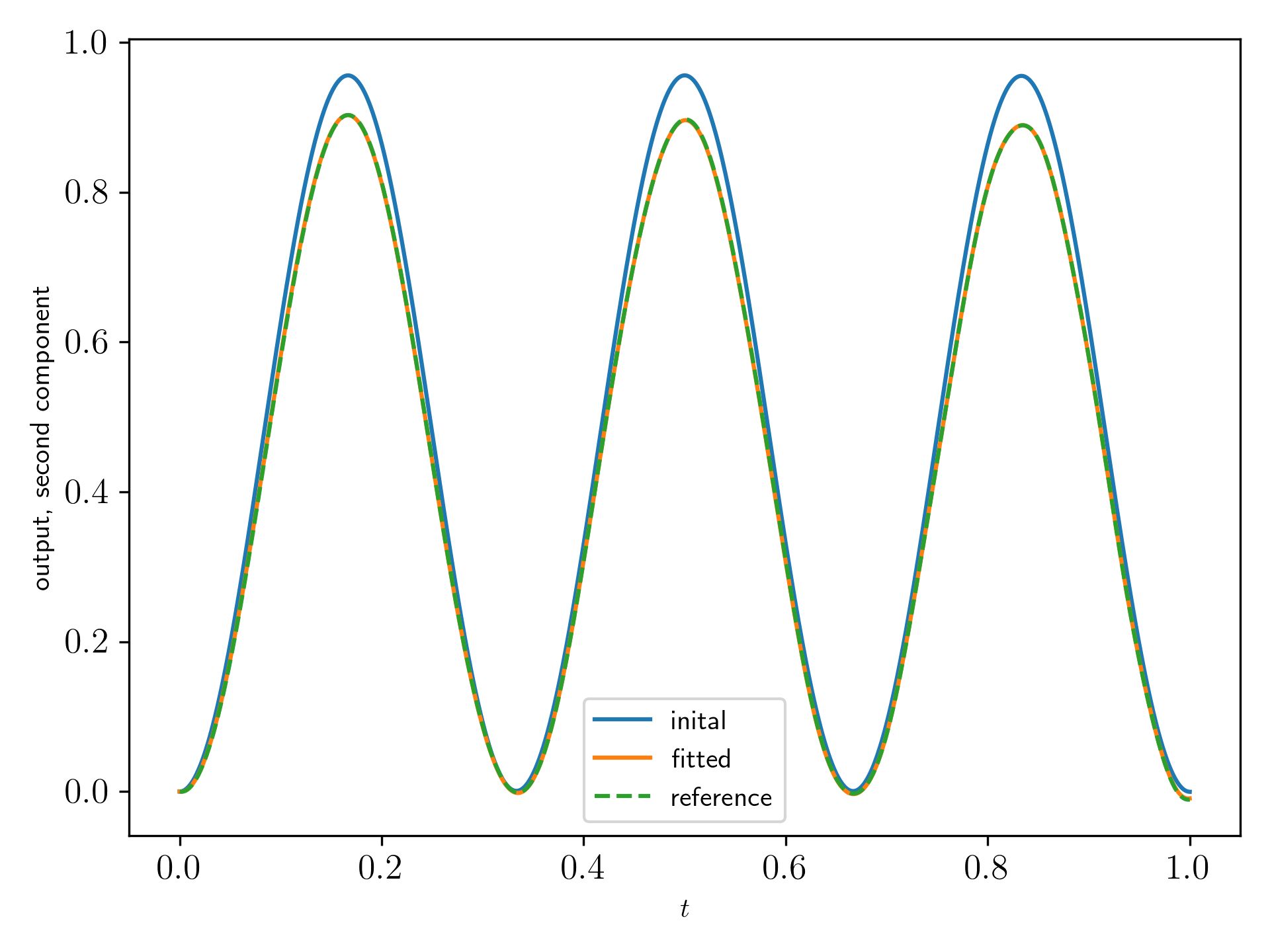}
    \caption{Left: First component of the output for the before the identification (initial), after the identification (fitted) and for the reference matrices (reference) used for the generation of the synthetic data. Right: Second component of the output for the before the identification (initial), after the identification (fitted) and for the reference matrices (reference)used for the generation of the synthetic data.}
    \label{fig:dataoutput_det_lam}
\end{figure}

The algorithm terminated after the maximal number of iterations as can be seen in Figure~\ref{fig:cost_lam}, where we show the evolution of the cost functional on the left-hand side and some additional information of the difference between the fitted and reference output in the maximum norm. The reduction of the cost functional is above $97\%$. 

In Figure~\ref{fig:dataoutput_det_lam} we show the output obtained with the initial matrices before the identification in blue, the orange graph shows the output of the fitted system and the green-dashed line shows the output obtained with the reference data. As the error values above already indicated the fitted output matches the reference output very well.

We close the numerical analysis of the identification of $J,R, B$ and $Q$ with a cross validation test. The input signal $u_\text{test}$ is now applied to the fitted and reference systems and we compare the corresponding output. Figure~\ref{fig:robustness_det_lam} shows the results of this cross validation graphically. In numbers, we obtain for the L2-difference $\| y - y_\text{test}\|_{L^2((0,1);\mathbb R^2)} = 2.41e^{-1}$ and for maximum norm $\| y - y_\text{test}\|_{\infty} = 4.16e^{-1}$. 
\begin{figure}[ht!]
    \centering
    \includegraphics[scale=0.45]{./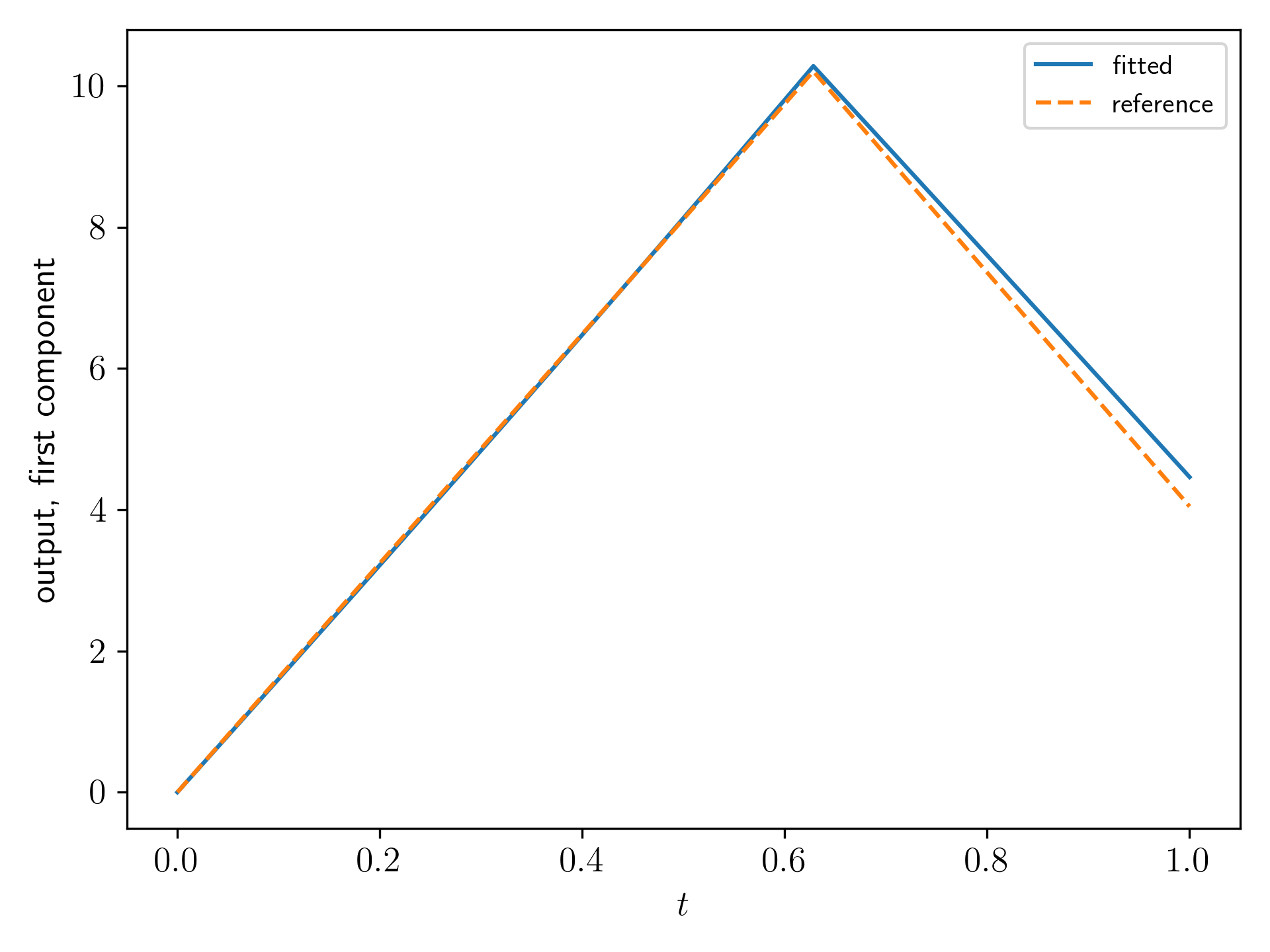}
    \includegraphics[scale=0.45]{./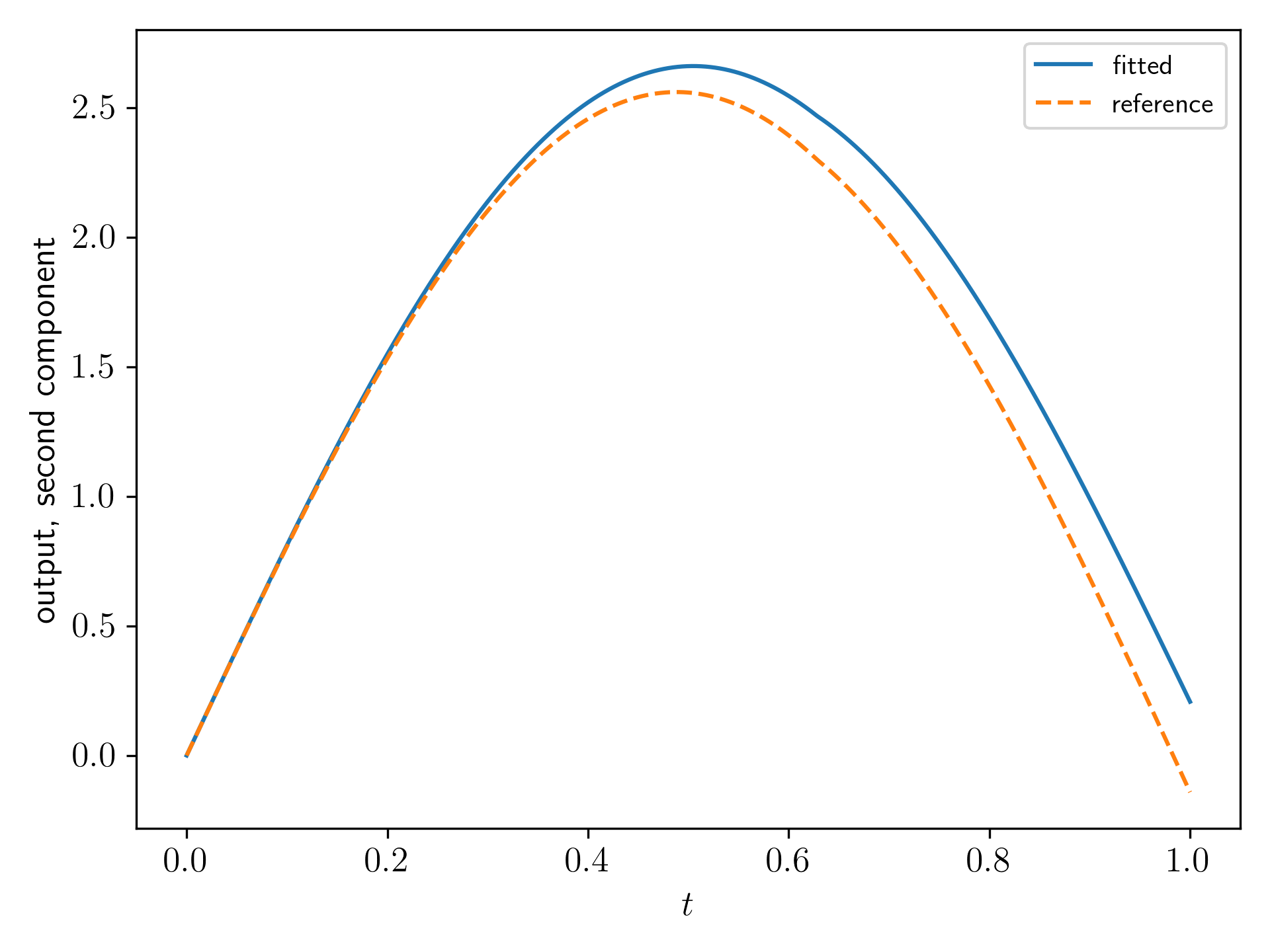}
    \caption{Left: First component of the output obtained in the cross validation. Right: Second component of the output obtained in the cross validation.}
    \label{fig:robustness_det_lam}
\end{figure}
As expected, the penalization of $Q$ stabilizes the parameter fitting process, but as the true value of $Q$ is in general unknown, it may cause an error by keeping $Q$ close to the identity.

\subsubsection*{Identification of $x_0$}
Now, we assume to know the system matrices $J,Q,R$ and $B$ and we are interested to fit the initial value of the internal state $x_0$, which is in practise unknown. The parameters are chosen as before.

\begin{figure}[ht!]
    \centering
    \begin{minipage}{0.49\textwidth}
    \includegraphics[scale=0.5]{./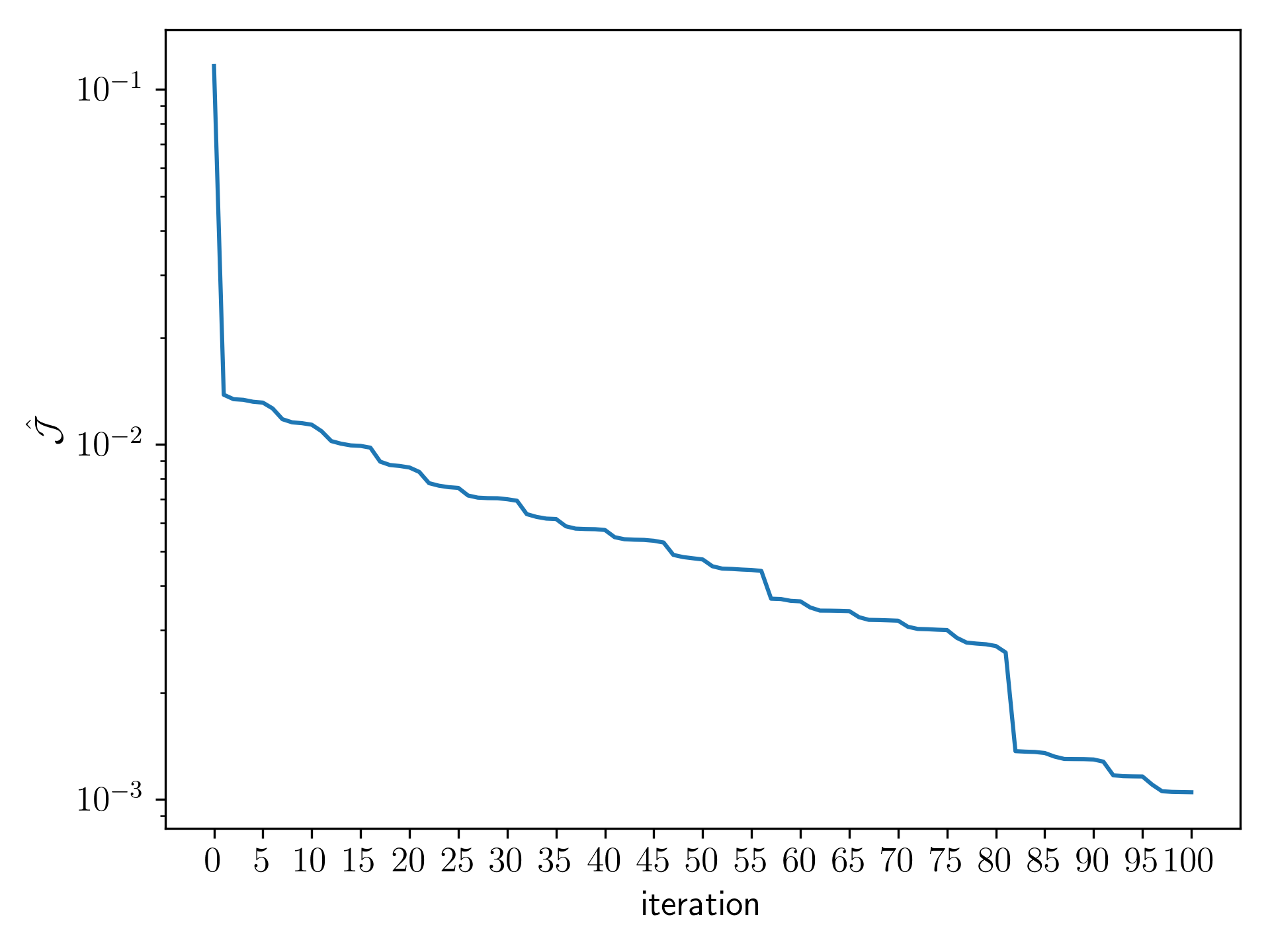}
    \end{minipage}
    \begin{minipage}{0.49\textwidth}
    \begin{tabular}{l | c }
    cost in first iteration & $1.16e^{-1}$  \\
    cost in last iteration & $1.05e^{-3}$ \\
    $\|y - y_\text{ref}\|_\infty$ in first iteration & $6.38e^{-1}$ \\
     $\|y - y_\text{ref}\|_\infty$ in last iteration & $8.75e^{-2}$ \\
    \end{tabular}
    \end{minipage}
    \caption{Left: Evolution of the cost over the iterations of the identification in $\log$-plot. Right: cost values before / after the identification and difference of fitted output and reference output in the maximum norm.}
    \label{fig:costX}
\end{figure}

\begin{figure}[ht!]
    \centering
    \includegraphics[scale=0.4]{./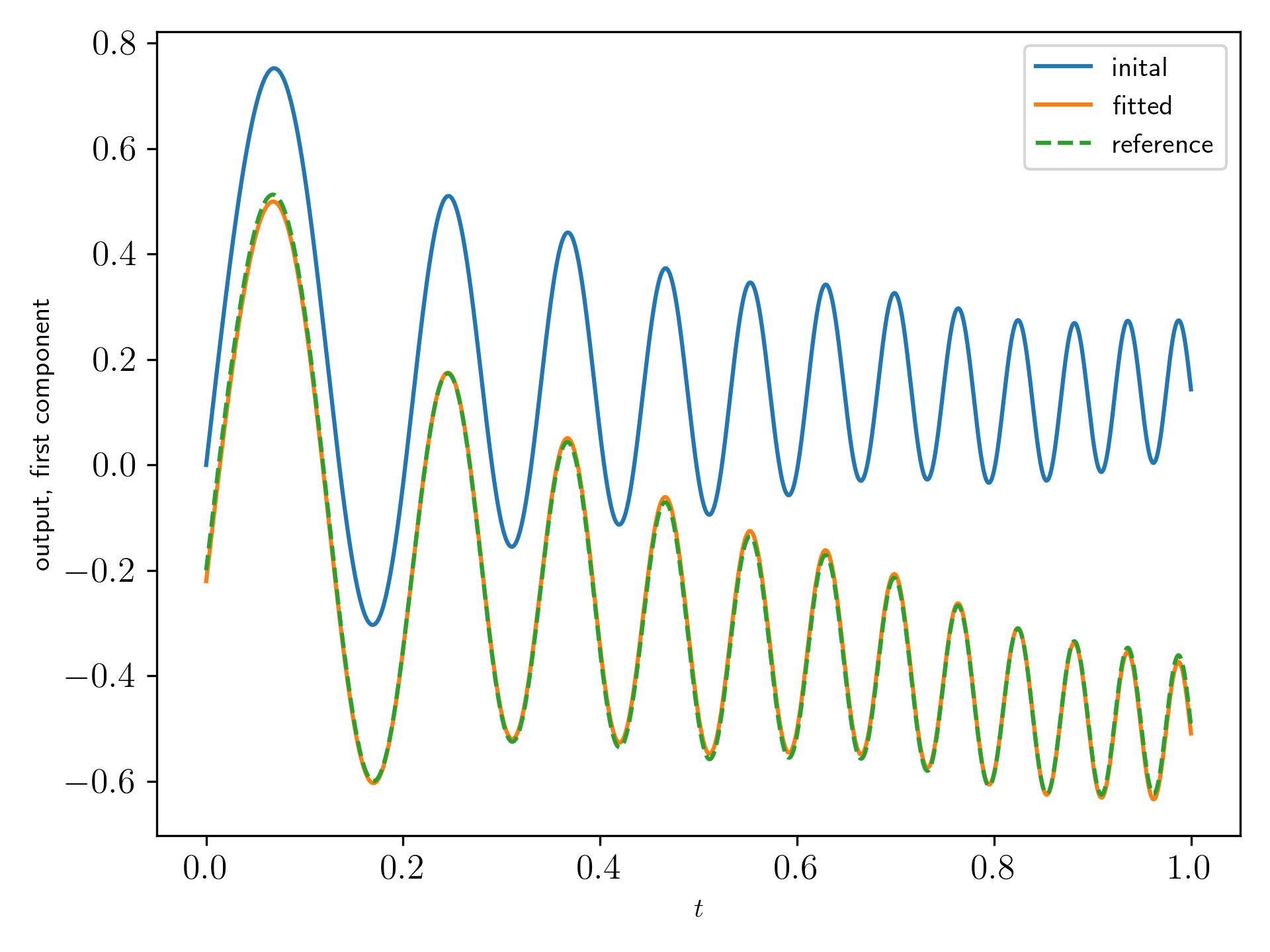}
    \includegraphics[scale=0.4]{./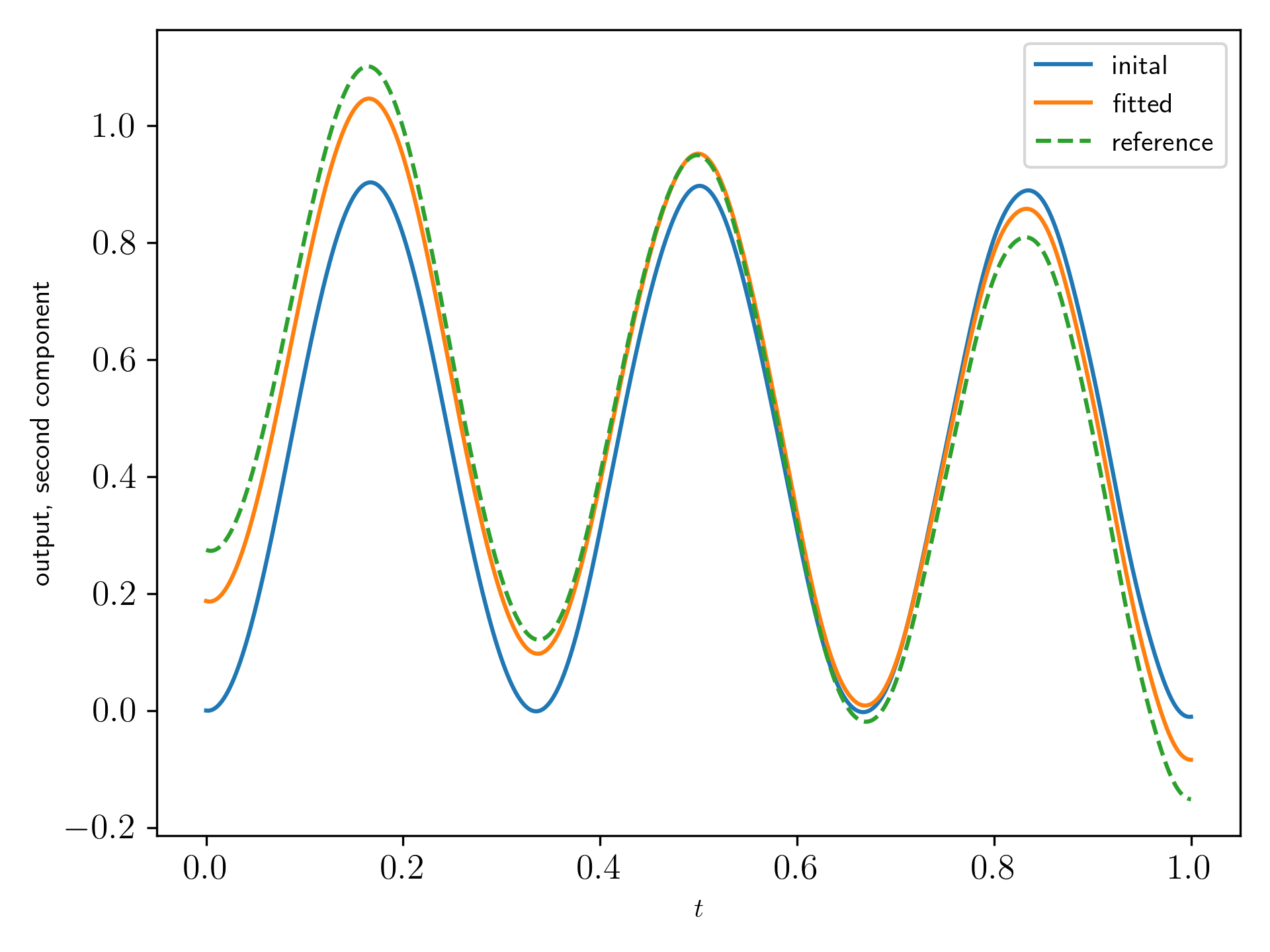}
    \caption{Left: First component of the output for the before the identification (initial), after the identification (fitted) and for the reference matrices (reference) used for the generation of the synthetic data. Right: Second component of the output for the before the identification (initial), after the identification (fitted) and for the reference matrices (reference)used for the generation of the synthetic data.}
    \label{fig:dataoutput_X}
\end{figure}

The algorithm terminated after the maximal number of iterations as can be seen in Figure~\ref{fig:costX}, where we show the evolution of the cost functional on the left-hand side and some additional information of the difference between the fitted and reference output in the maximum norm. Note that we cannot expect higher accuracy, since the error in the cost is of the order of the accuracy of the Euler scheme that we use to solve the state and adjoint systems. The reduction of the cost functional is above $99\%$. 

In Figure~\ref{fig:dataoutput_X} we show the output obtained with the initial matrices before the identification in blue, the orange graph shows the output of the fitted system and the green-dashed line shows the output obtained with the reference data. As the error values above already indicated the fitted output matches the reference output very well.

We close the section on the deterministic data with a cross validation test. The input signal $u_\text{test}$ is now applied to the fitted and reference systems and we compare the corresponding output. Figure~\ref{fig:robustness_det} shows the results of this cross validation graphically. In numbers, we obtain for the L2-difference $\| y - y_\text{test}\|_{L^2((0,1);\mathbb R^2)} = 4.58e^{-2}$ and for maximum norm $\| y - y_\text{test}\|_{\infty} = 8.75e^{-2}$. 
\begin{figure}[ht!]
    \centering
    \includegraphics[scale=0.45]{./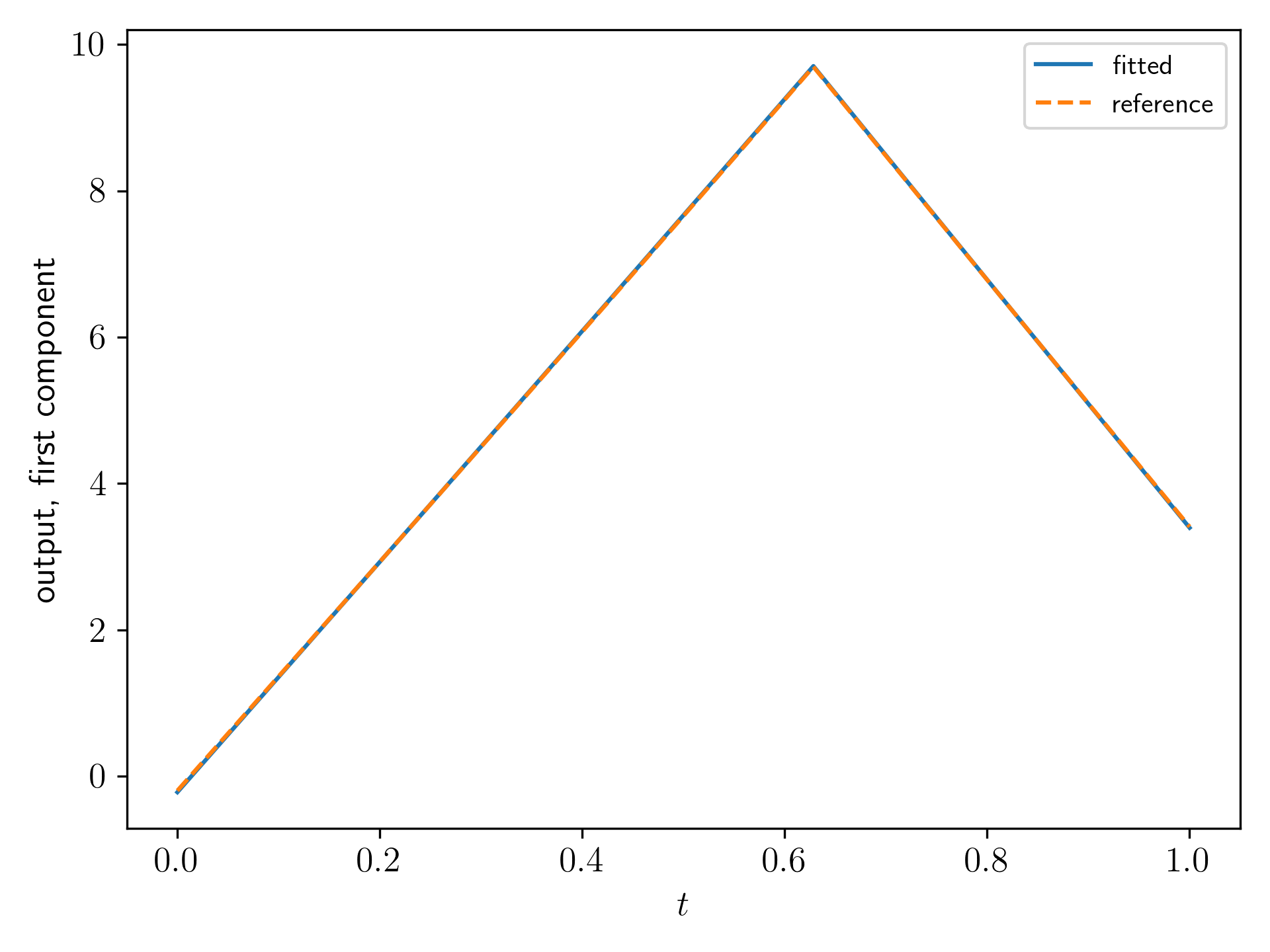}
    \includegraphics[scale=0.45]{./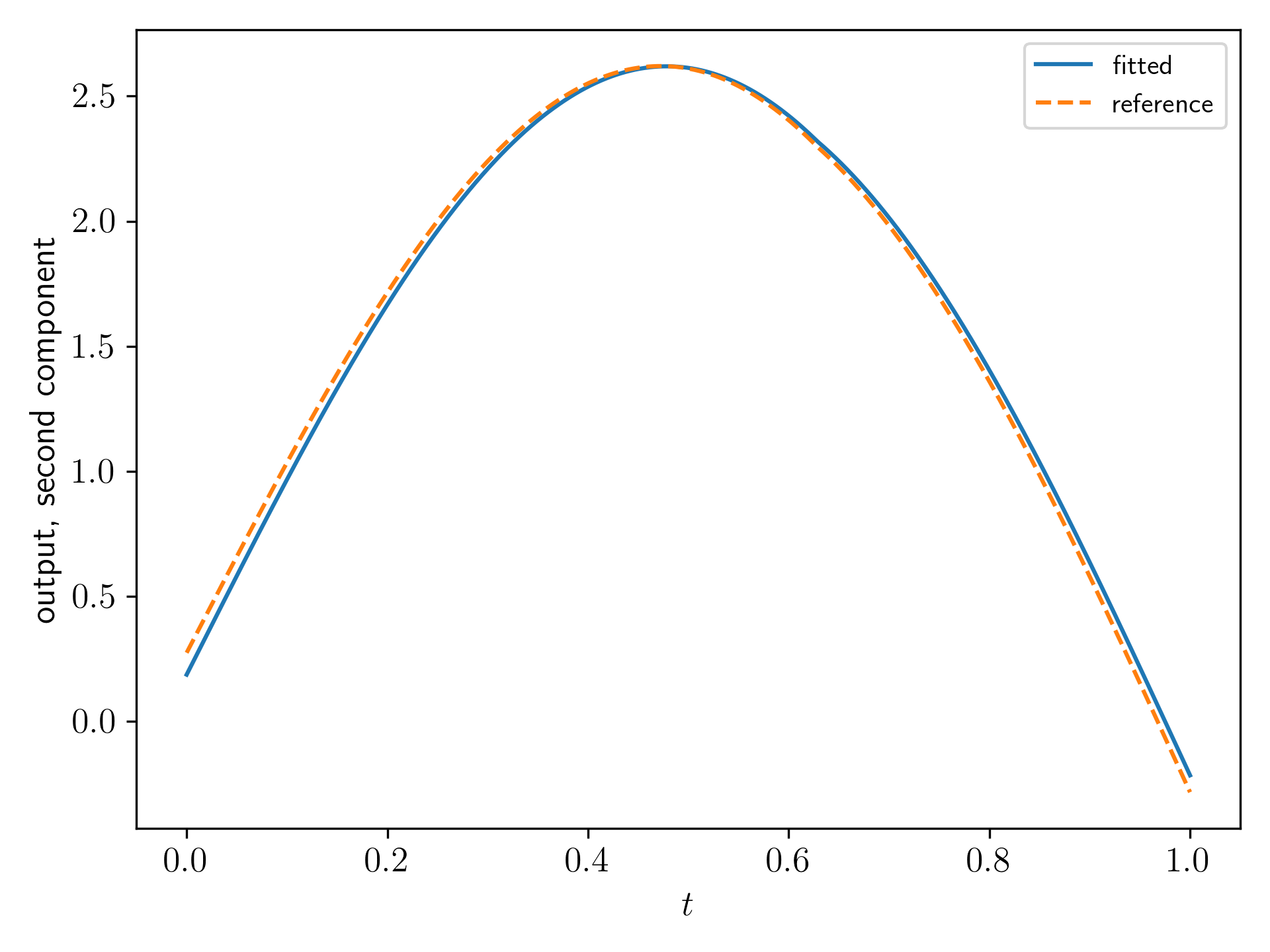}
    \caption{Left: First component of the output obtained in the cross validation. Right: Second component of the output obtained in the cross validation.}
    \label{fig:robustness_X}
\end{figure}

\subsection{Synthetic data (randomly perturbed)}
Let us consider the same setting as in the previous subsection but with randomly perturbed output data. In every time-step we add independent normally distributed vectors $n_i \in \R^m$ leading us to the perturbed data given by
\[
y_\text{data,$\sigma$}(t_i) = y_\textrm{data}(t_i) + \sigma n_i.
\]
We show results for $\sigma\in \{0.01, 0.05, 0.25\}.$ 

\begin{figure}[h!]
    \centering
    \includegraphics[scale=0.6]{./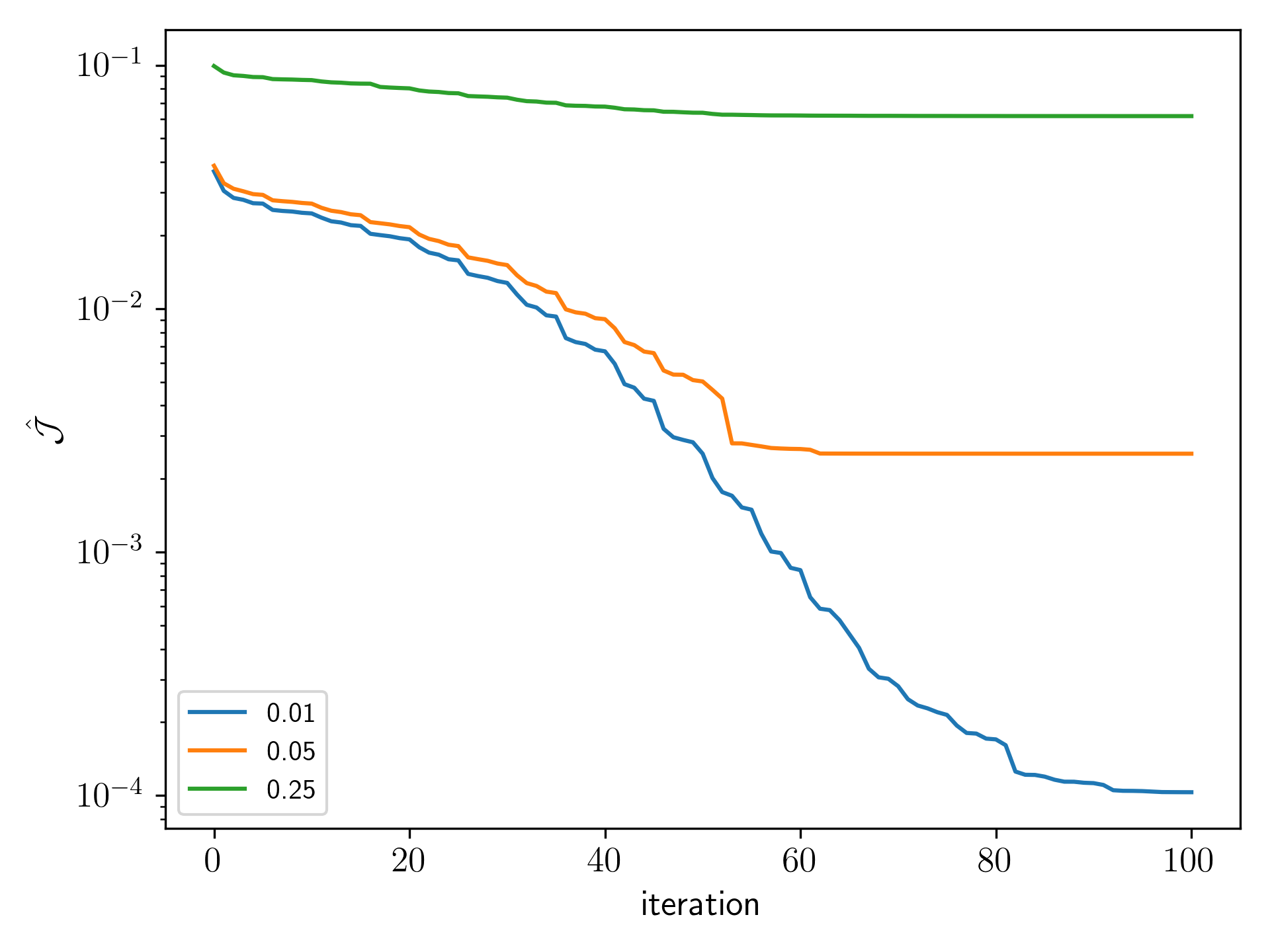}
    \caption{Evolution of the cost functional for difference noise levels. $\sigma \in \{0.01, 0.05, 0.25 \}$.}
    \label{fig:cost_noise}
\end{figure}

\begin{table} \centering
\begin{tabular}{l | c | c | c }
    $\sigma$ & $0.01$ & $0.05$ & $0.25$ \\  \hline
    cost in first iteration & $3.67e^{-2}$ & $3.86e^{-2}$ & $9.92e^{-2}$\\
    cost in last iteration & $1.03e^{-4}$ & $2.53e^{-3}$ & $6.18e^{-2}$\\
    $\|y - y_\text{ref}\|_\infty$ in first iteration & $7.06e^{-1}$ & $7.06e^{-1}$ & $7.06e^{-1}$ \\
     $\|y - y_\text{ref}\|_\infty$ in last iteration & $4.88e^{-3}$ &  $3.66e^{-3}$ & $1.37e^{-2}$ \\
\end{tabular}
\caption{Additional information on the identification with different noise levels.}
\label{tab:error_noise}
\end{table}

Figure~\ref{fig:cost_noise} shows the evolution of the cost functionals over the identification iterations for the different noise levels. For $\sigma=0.25$ the cost is almost constant, hence the noise overlays the information. However, the error values in Table~\ref{tab:error_noise} show that the identification works stable. This is also true for the cross validation shown in Figure~\ref{fig:cross_noise} and the corresponding errors in Table~\ref{tab:cross_error_noise}. It is interesting to see that the robustness in the cross validation test increases with the noise level in the training.

\begin{figure}[ht!]
    \centering
    \includegraphics[scale=0.5]{./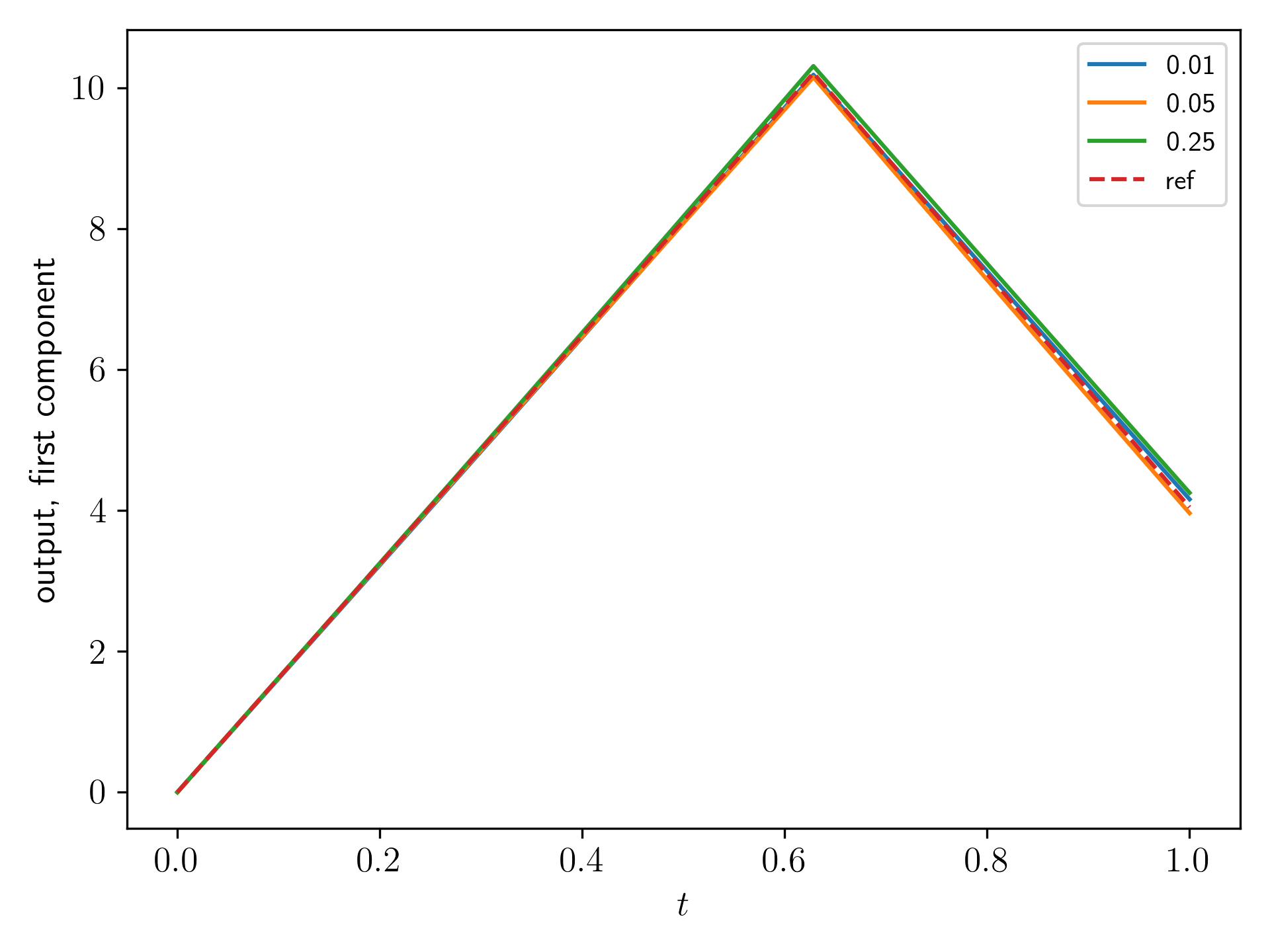}
    \includegraphics[scale=0.5]{./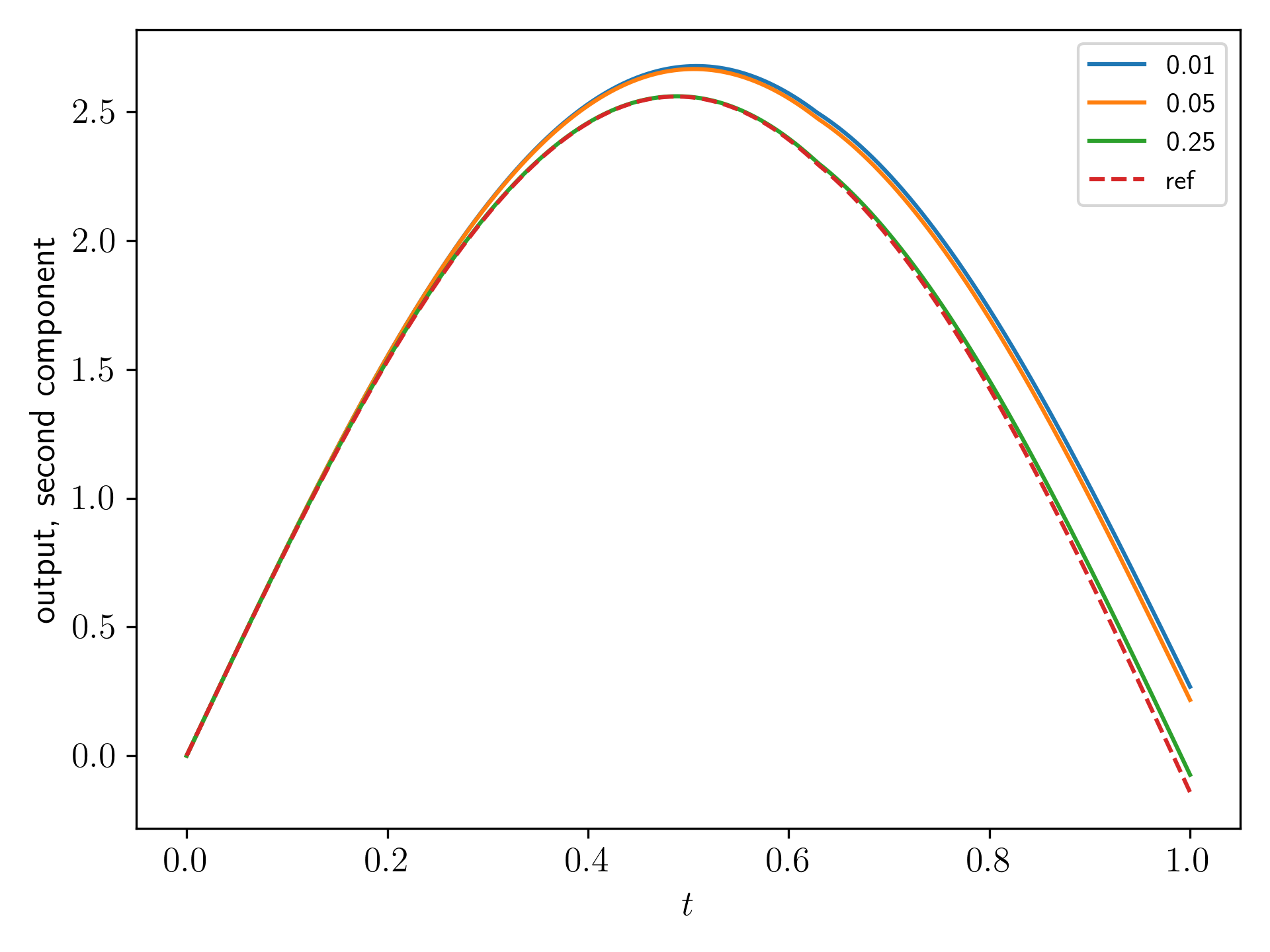}
    \caption{Output of the cross validation test for different noise levels $\sigma \in \{0.01, 0.05, 0.25 \}$.}
    \label{fig:cross_noise}
\end{figure}

\begin{table} \centering
\begin{tabular}{l | c | c | c }
    $\sigma$ & $0.01$ & $0.05$ & $0.25$ \\  \hline
    $\|y - y_\text{test}\|_{L_2((0,1);\mathbb R^2)}$ & $2.09e^{-1}$ & $1.89e^{-1}$ & $1.01e^{-1}$ \\
     $\|y - y_\text{test}\|_\infty$ & $4.1e^{-1}$ &  $3.58e^{-1}$ & $2.04e^{-1}$ \\
\end{tabular}
\caption{Errors obtained in the cross validation for different noise levels.}
\label{tab:cross_error_noise}
\end{table}

\subsection{Model order reduction test with single mass-spring-damper chain}
The last test case we consider is taken from the PHS benchmark collection \cite{PHSbenchmarks} and we aim to investigate the robustness of our approach in case of model order reduction, i.e.~if the model size $n$ deviates for the real data.

We generated the standard example with $50$ mass-spring-damper cells that are each connected with their neighboring masses by springs. The last mass is connected to a wall via a spring while at the two first masses external forces are applied. These are two dimensional inputs leading to $m=2.$ Moreover, each mass is connected with the ground with a damper. All masses are set to $4,$ the damping coefficient to $1$ and the stiffness to $4$. The output $y$ are  the velocities of the masses which are controlled. For an illustration and more details we refer to the documentation of the MSD Chain example referenced at \cite{PHSbenchmarks}. 

\begin{figure}[ht!]
    \centering
    \includegraphics[scale=0.45]{./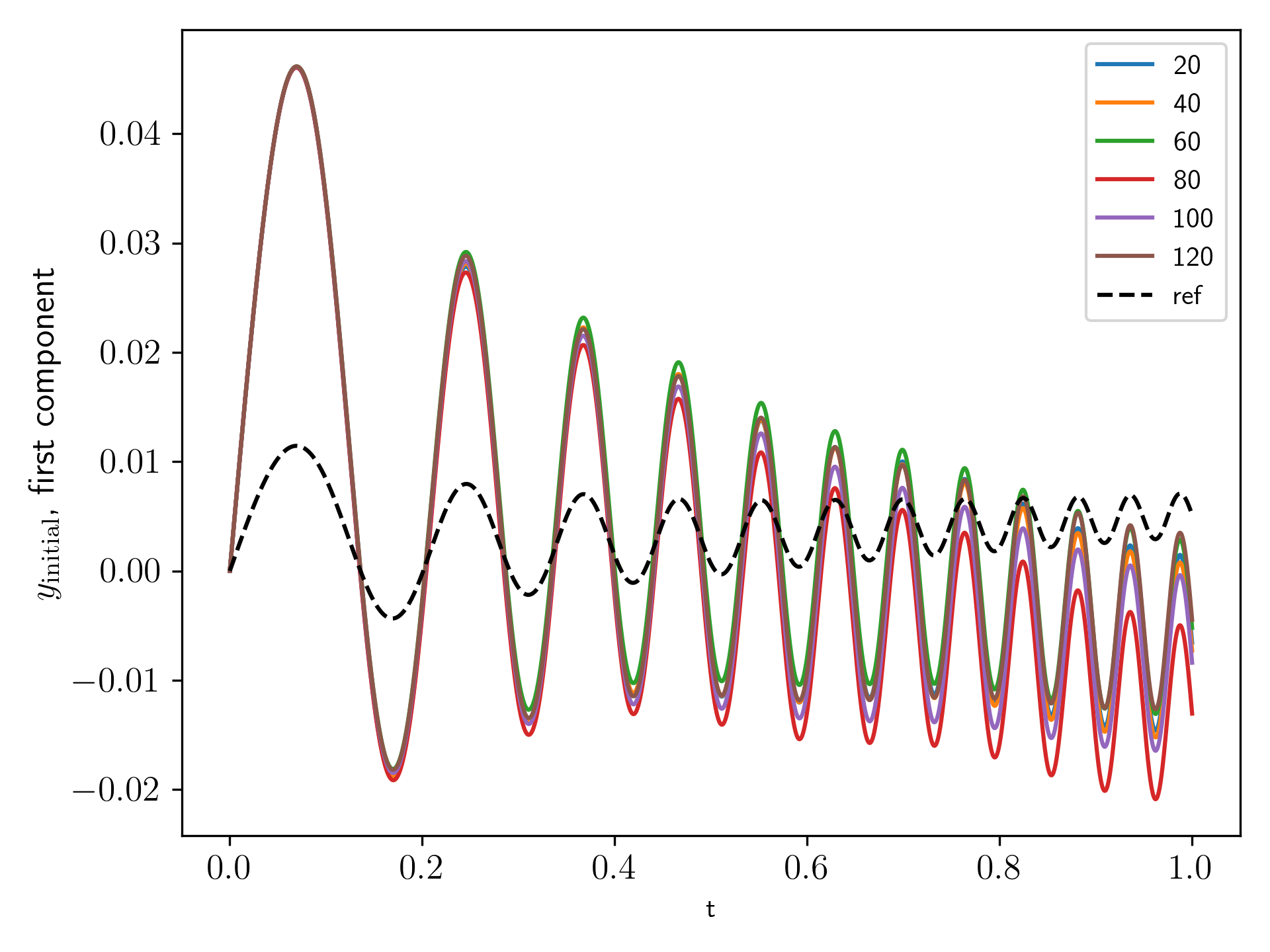}
    \includegraphics[scale=0.45]{./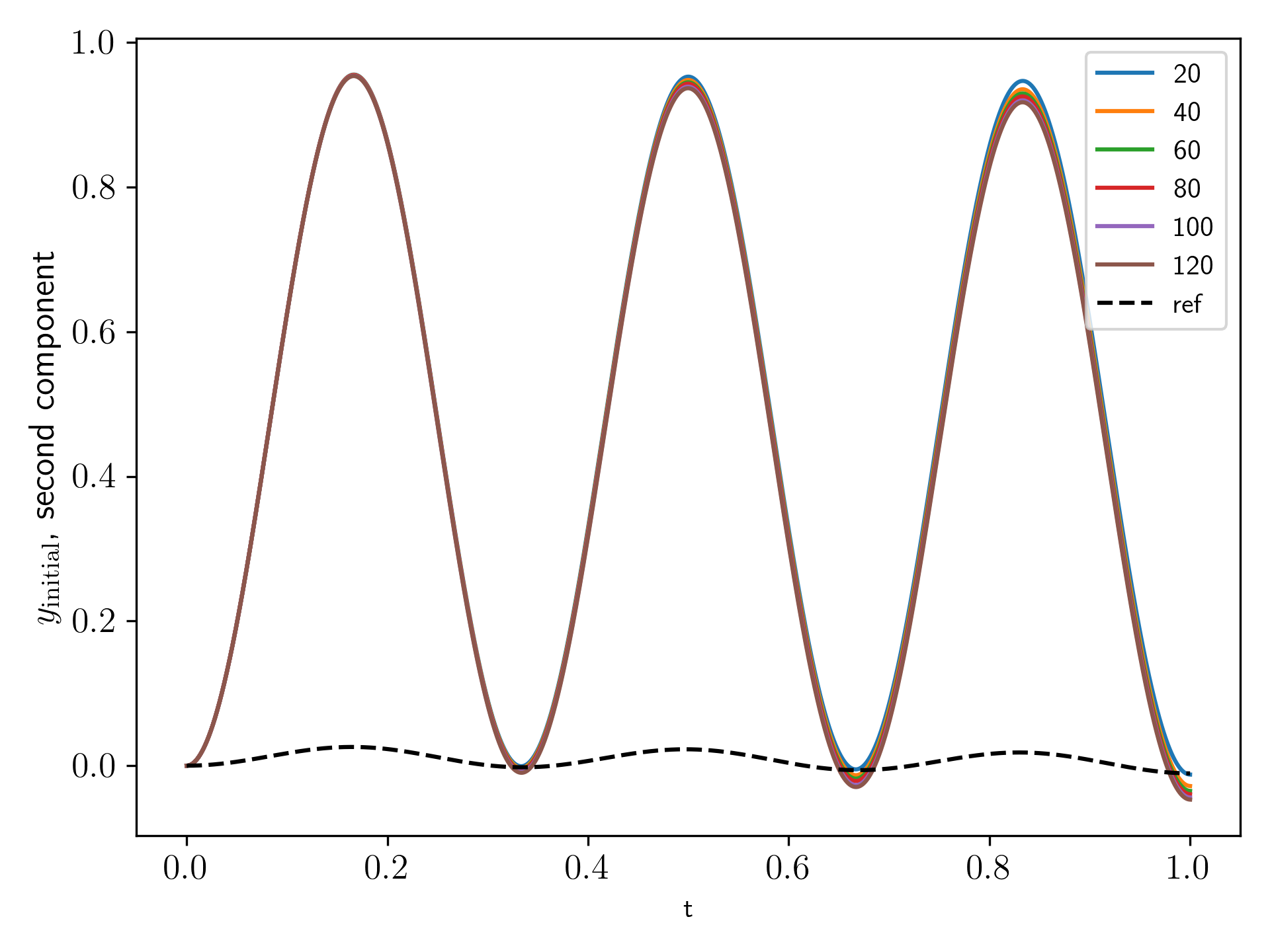}
    \caption{Left: first component of the initial output for different model sizes $n$. Right: second component of the initial output for different model sizes $n$.}
    \label{fig:initial_MSD}
\end{figure}

Figure~\ref{fig:initial_MSD} shows the output for the initial configuration before the identification for different model sizes $n$ and the reference output as black dashed line.  

\begin{figure}[ht!]
    \centering
    \includegraphics[scale=0.45]{./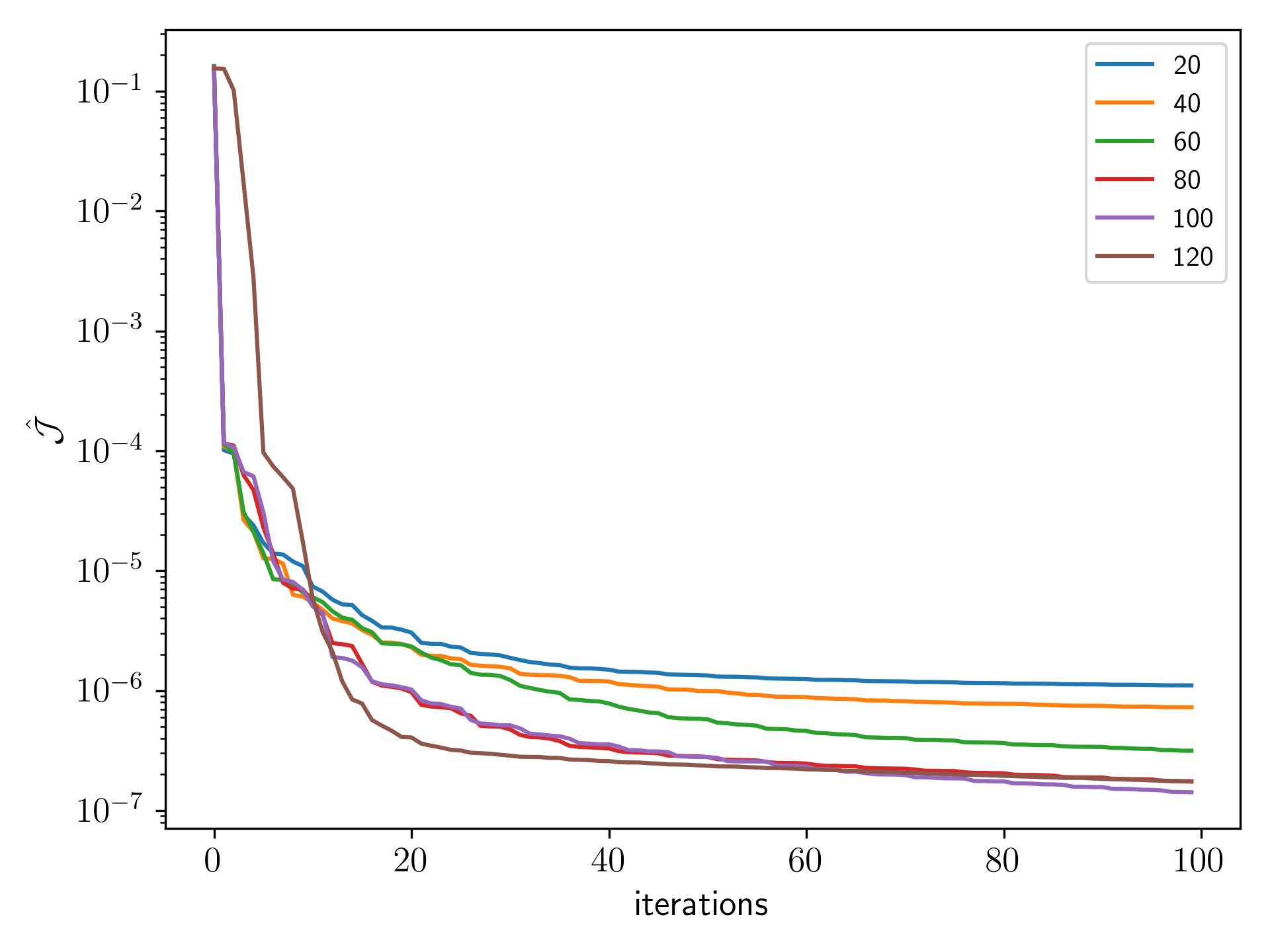}
    \includegraphics[scale=0.45]{./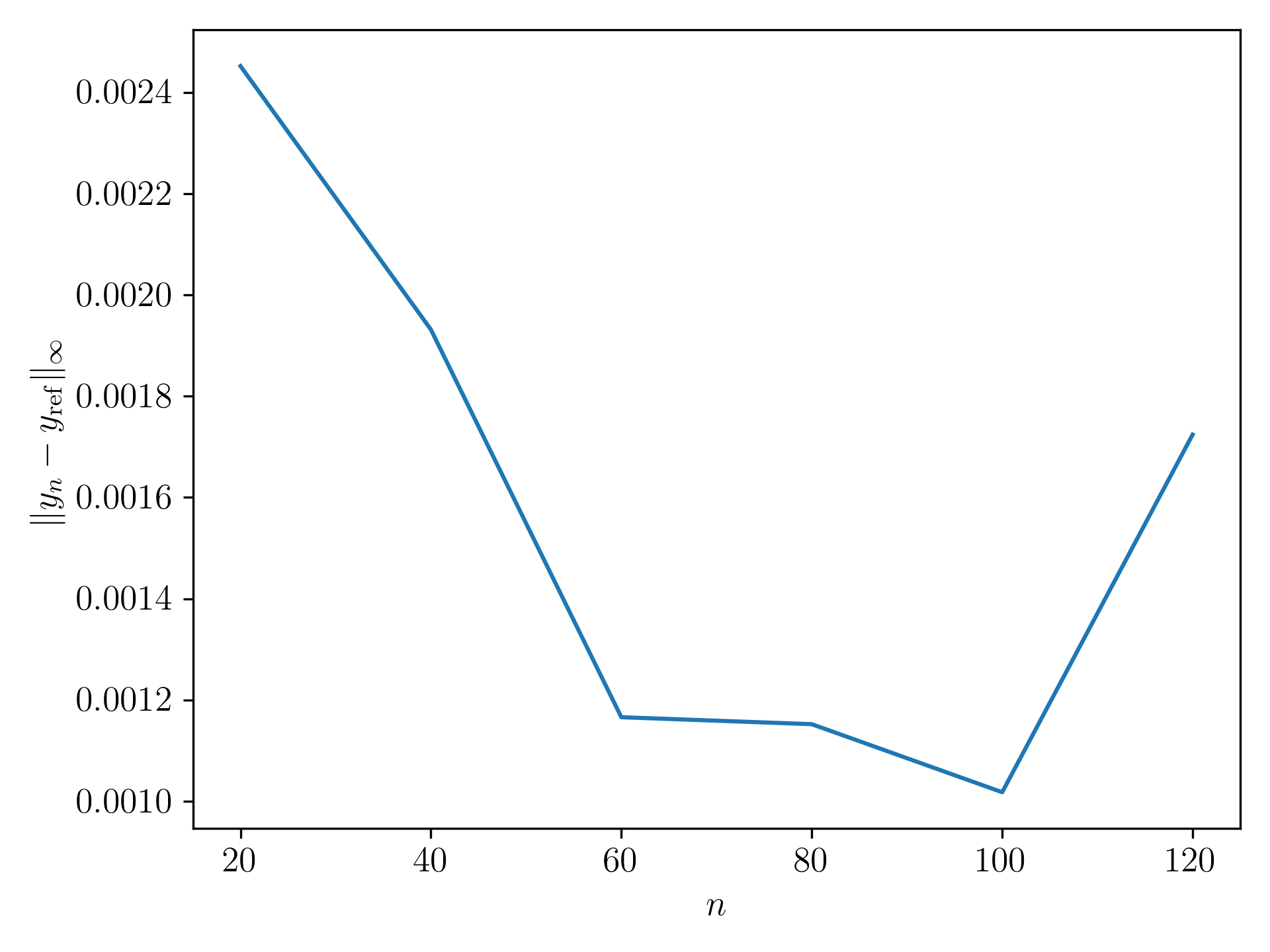}
    \caption{Left: evolution of the cost functional over the identification iterations for different model sizes $n$. Right: Relative differences of the model output $y_n$ and the reference output $y_\text{ref}$ measured in the maximum norm for different model sizes $n$.}
    \label{fig:results_MSD}
\end{figure}

We extracted the system matrices $J,Q,R \in \R^{100 \times 100}$ and the input matrix $B \in \R^{100\times 2}$ from the julia implementation and stored them in python npy-format to use them with our implementation of the algorithm. 

In Figure~\ref{fig:results_MSD} we show the evolution of the cost functional over the identification iterations and the relative error of the output compared in the maximum norm for different model sizes $n$ with true reference size $n=100$. As expected models with larger $n$ are able to capture the dynamics better, however, even for the $n=20$ the error after the identification is below $10$\% and the values for $n=60,80$ and $100$ are below
$5$ \%. While the cost corresponds to the $L_2$-norm, we show the relative error of the output measured in the maximum norm on the left panel in Figure~\ref{fig:results_MSD}. 

\begin{table}[ht!]
\centering
\begin{tabular}{l | r r r r r r }
n & 20 & 40 & 60 & 80 & 100 & 120 \\ \hline  
$\| y_n - y_\mathrm{ref}\|_\infty / \|y_\mathrm{ref}\|_\infty$ before & $359 \%$ & $359 \%$ & $359 \%$ & $359 \%$ & $359 \%$ & $359 \%$ \\
$\| y_n - y_\mathrm{ref}\|_\infty / \|y_\mathrm{ref}\|_\infty$ after & $9.48 \%$ & $7.47 \%$ & $4.51 \%$ & $4.45 \%$ & $3.93 \%$ & $6.66 \%$ \\
\end{tabular}
\caption{Relative errors of the output measured before and after the identification}
\label{tab:rel_error_model_reduction}
\end{table}

While the relative error before the identification is about $359 \%$ for all model sizes the error after the identification ranges between $10-5$\%. The exact values are shown in Table~\ref{tab:rel_error_model_reduction} and the fitted output are shown in Figure~\ref{fig:fitted_MSD}.

\begin{figure}[ht!]
    \centering
    \includegraphics[scale=0.45]{./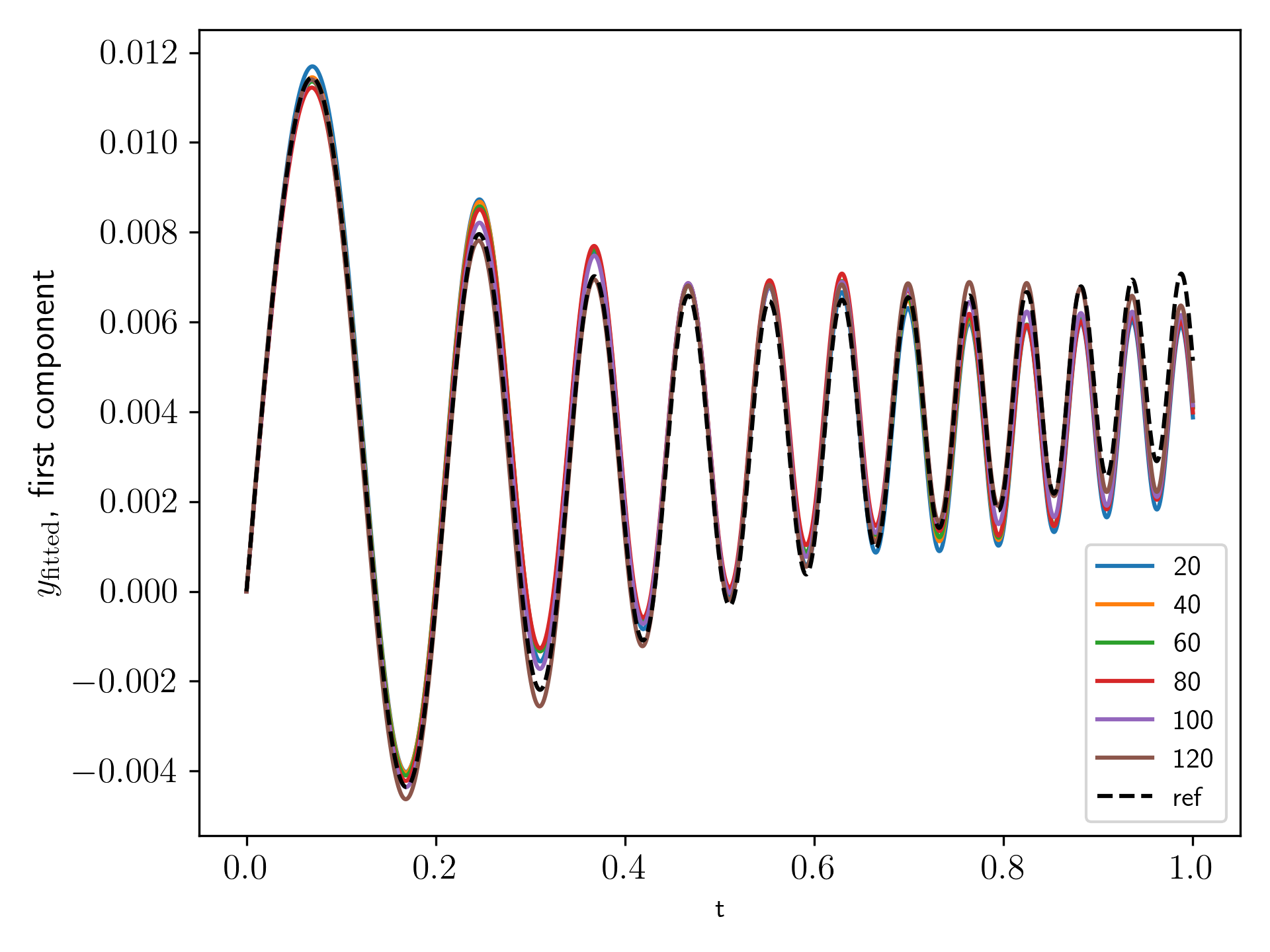}
    \includegraphics[scale=0.45]{./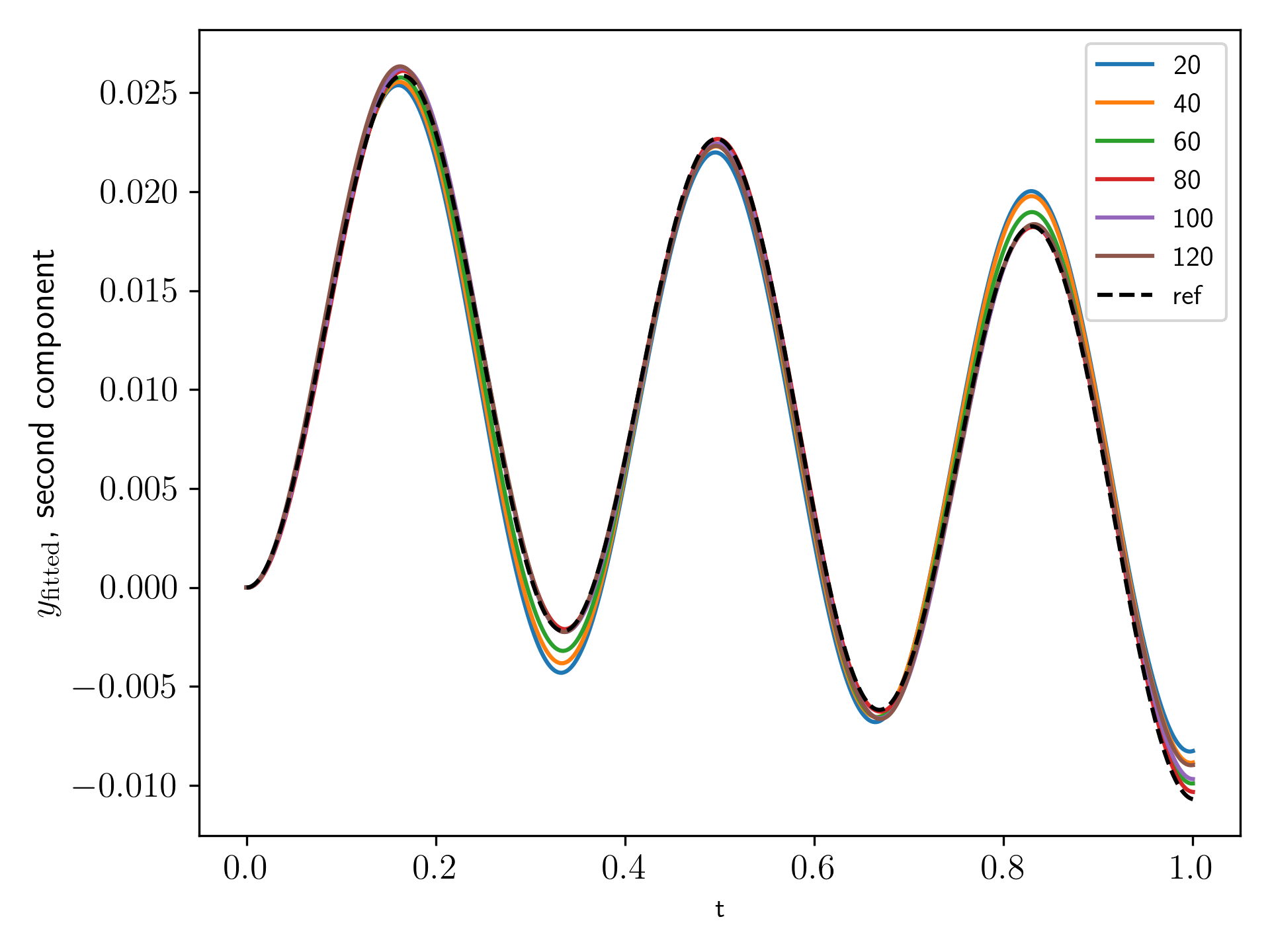}
    \caption{Left: first component of the fitted output for different model sizes $n$. Right: second component of the fitted output for different model sizes $n$.}
    \label{fig:fitted_MSD}
\end{figure}

\section{Conclusion and outlook}
In this paper we have proposed a way of calibrating linear PHS models from given input-output time data in a direct approach: structure preservation defines a constrained optimization problem, which is solved by an adjoint-based conjugate gradient descent algorithm --- there is no need of first deriving a best-fit linear state-space model and then, in a post-processing step, finding the nearest PHS realization. In addition, there is no need for parameterization to transfer the constrained optimization problem into an unconstrained one. Numerical results underpin the feasibility of our approach for both,  synthetic data and a model reduction example employing a mass-spring-damper benchmark problem. 

\section*{Declarations}

\begin{itemize}
\item \textbf{Funding:} Not applicable
\item \textbf{Competing interests:}  The authors declare that they have no competing interests.
\item \textbf{Ethics approval:}
Not applicable
\item \textbf{Availability of data and materials:} The code is publicly available on github: \\
https://github.com/ctotzeck/PHScalibration.
\item \textbf{Authors' contributions:} This whole paper was a joint effort by MG, BJ and CT.  All authors read and approved the final manuscript.

\end{itemize}

\bibliographystyle{alpha}
\bibliography{sample}

\end{document}